\documentclass{article}

\usepackage{PRIMEarxiv}

\usepackage[utf8]{inputenc} % allow utf-8 input
\usepackage[T1]{fontenc}    % use 8-bit T1 fonts
\usepackage{hyperref}       % hyperlinks
\usepackage{url}            % simple URL typesetting
\usepackage{booktabs}       % professional-quality tables
\usepackage{amsfonts,amsmath,amssymb,amsthm}       % blackboard math symbols
\usepackage{nicefrac}       % compact symbols for 1/2, etc.
\usepackage{microtype}      % microtypography
\usepackage{lipsum}
\usepackage[square,numbers]{natbib}
\usepackage{fancyhdr}       % header
\usepackage{graphicx}       % graphics
\usepackage{float}
\usepackage{subcaption}
\graphicspath{{media/}}     % organize your images and other figures under media/ folder
\usepackage{bbold}
\usepackage{orcidlink}

\usepackage{tikz}
\usetikzlibrary{graphs, positioning}
\usetikzlibrary{calc}

\newtheorem{theorem}{Theorem}[section]

\newtheorem{corollary}[theorem]{Corollary}
\newtheorem{proposition}[theorem]{Proposition}

\newcommand{\bigzero}{\mbox{\normalfont\bfseries 0}}
\newcommand{\bigjota}{\mbox{\normalfont\bfseries J}}
\title{Eigenvalues of $\boldsymbol{L_\alpha}-$matrices under graph operations
}

\author{
    Gabriel Roberto Silva de Lima\\
    Mathematics Department \\
    Escola Nacional de Ciências Estatísticas, ENCE/IBGE \\
    Rio de Janeiro, Brazil\\
     \texttt{gabroberto2@gmail.com}\\
    \And
   Carla Silva Oliveira\orcidlink{0000-0001-6684-8811} \\
  Mathematics Department \\
  Escola Nacional de Ciências Estatísticas, ENCE/IBGE \\
  Rio de Janeiro, Brazil\\
  \texttt{carla.oliveira@ibge.gov.br} \\
   \And
  João Domingos Gomes da Silva Junior\orcidlink{0000-0002-1745-0302}\\
  Mathematics Department \\
  Colégio Pedro II \\
  Rio de Janeiro, Brazil \\
  \texttt{joao.dgomes@gmail.com} \\
}

\begin{document}
\maketitle

\begin{abstract}
    Let $G$ be a simple graph, $A(G)$ its adjacency matrix, and $D(G)$ its diagonal degree matrix. In 2022, \citeauthor{Wang2020} (\cite{Wang2020}) defined the family of matrices $L_\alpha$ as the convex linear combination:
    \[
    L_\alpha(G) = \alpha D(G) + (\alpha - 1)A(G),
    \]
    where $\alpha \in [0,1]$. 
    The study of the spectrum of this family of matrices may provide a unified framework for analyzing 
    the spectra of the adjacency, degree, and Laplacian matrices ($D(G) - A(G)$). 
    In this work, we investigate the spectrum of $L_\alpha$ under graph operations 
    and within specific families of graphs.
\end{abstract}

% keywords can be removed
\keywords{Graphs \and $L_\alpha-$matrices \and $L_\alpha-$spectrum.}

\section{Introduction}\label{sec1}
Let $G=(V(G),E(G))$ or simply $G=(V,E)$ be a simple graph. The cardinality $|V|$ is the \textit{order} of $G$, and $|E|$ is its \textit{size}. Two vertices $u,v$ are adjacent iff $\{u,v\}\in E$. An independent set is a subset of $V$ with no adjacent vertices. The open neighborhood of \( v \) is \( N_G(v) \subseteq V \); the closed neighborhood is \( N_G[v]=N_G(v)\cup\{v\} \). The degree of \( v_i \in V \), denoted \( d_G(v_i) \) or \( d(v_i) \), is equal to $|N_G(v_i)|$ and $\displaystyle \sum_{i=1}^{n} d(v_i)=2m.$ An edge is called \textit{pendant} if it is incident to a vertex of degree $1$. The minimum degree is $\delta(G)=\delta=\min\{d(v):v\in V\}$ and the maximum degree is $\Delta(G)=\Delta=\max\{d(v):v\in V\}$. Two vertices $u,v\in V$ are \emph{true twins} if \( N_G[u]=N_G[v] \), and \emph{false twins} if they are nonadjacent and \( N_G(u)=N_G(v) \). A graph is \emph{complete}, denoted $K_n$, if every pair of distinct vertices is adjacent. A graph $G$ is \emph{regular} if all vertices have the same degree; we say $G$ is $r$-regular when the common degree is $r$. $G$ is \emph{bipartite} if $V$ can be partitioned into two independent parts $V_1,V_2$ with $V_1\cap V_2=\emptyset$ and $E\subseteq \{\{u,v\}:u\in V_1, v\in V_2\}$. If every vertex of $V_1$ (with $|V_1|=p$) is adjacent to every vertex of $V_2$ (with $|V_2|=q$), then $G$ is the complete bipartite graph  denoted by $K_{p,q}$. The path on $n$ vertices, $P_n$, is the simple graph with $V=\{v_1,\dots,v_n\}$ and $E=\{\{v_i,v_{i+1}\}\mid 1\le i<n\}$ and, for $n\ge 3$, the cycle $C_n$ is the simple graph with $V=\{v_1,\dots,v_n\}$ and $E=\{\{v_i,v_{i+1}\}\mid 1\le i<n\}\cup \{\{v_n,v_1\}\}$.

Let $G=(V_1,E_1)$ and $H=(V_2,E_2)$ be  simple graphs. In the case, $V_1\cap V_2=\emptyset$, the union between $G$ and $H$ is defined by $G\cup H=(V_1\cup V_2,\ E_1\cup E_2)$. If $V_1\cap V_2=\emptyset$, the join $G\lor H$ is defined by $V(G\lor H)=V_1\cup V_2$ and $E(G\lor H)=E_1\cup E_2\cup \{\{u,v\}: u\in V_1,\, v\in V_2\}$.The cartesian product $G\times H$ has vertex set $V_1\times V_2$, where two vertices $(v_1,v_2)$ and $(v_1',v_2')$ are adjacent if and only if either $v_1=v_1'$ and $v_2$ is adjacent to $v_2'$ in $H$, or $v_2=v_2'$ and $v_1$ is adjacent to $v_1'$ in $G$. The direct product $G\odot H$ has vertices $(v_1,v_2)$ adjacent to $(v_1',v_2')$ if and only if $v_1$ is adjacent to $v_1'$ in $G$ and $v_2$ is adjacent to $v_2'$ in $H$. The strong product $G\otimes H$ connects $(v_1,v_2)$ and $(v_1',v_2')$ if and only if $v_1=v_1'$ or $v_1\sim v_1'$ in $G$, and simultaneously $v_2=v_2'$ or $v_2\sim v_2'$ in $H$. The coalescence of $G$ and $H$, denoted by $G \cdot H$, is a graph of order $\vert V_1 \vert + \vert V_2 \vert - 1$ that can be obtained from the graph $G \cup H$ by identifying some vertex of $G$ with some vertex of $H$. Given a graph $G$, the splitting graph $S(G)$ is obtained by adding, for each $v\in V_1$, a new vertex $v'$ adjacent to every vertex in $N_G(v)$. These operations defined above are illustrated in Figure~\ref{fig:operation} for $G\simeq P_2$ and $H \simeq P_3$.

 For $p, q$ natural numbers such that $q\ge 1$ and $p\ge 3$, the pineapple graph $K_p^{q}$ is obtained by attaching $q$ pendant vertices to a vertex of $K_p$. For $n\ge 3$ and $1\le l\le n$, an integer number, the graph $H_n^l$ is constructed from two copies of $K_n$ by adding $l$ edges between $l$ vertices of one copy and their corresponding vertices in the other. Under the same conditions, the graph $KK_n^l$ is obtained from two copies of $K_n$ by adding $l$ edges between a vertex of one copy and $l$ vertices of the other. Finally, for $c \ge 1$, $s \ge 1$, and $\eta \ge 2$, where $c, s, \eta$ are natural number, the Core–Satellite graph is defined by $\Theta(c,s,\eta)=K_c \lor \eta K_s$, where $\eta K_s$ denotes the disjoint union of $\eta$ copies of $K_s$. Some examples of these graph families are illustrated in Figure~\ref{fig:graphs}.

\begin{figure}[H]
    \centering
    \includegraphics[width=0.9\linewidth]{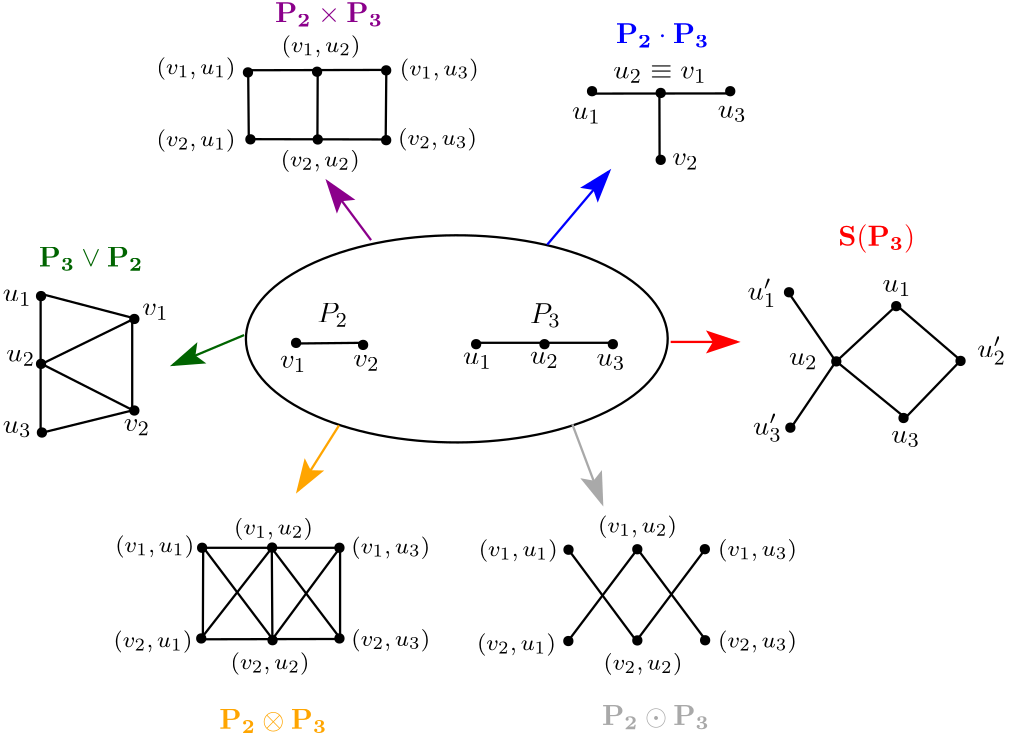}
    \caption{Examples of the graph operations and the graph $S(P_3)$} 
    \label{fig:operation}
\end{figure}

\begin{figure}[H]
\centering
\begin{subfigure}{0.3\textwidth}
\centering
\begin{tikzpicture}[scale=0.75, 
    vertex/.style={circle, draw, fill=black, inner sep=1.3pt}]

    \node[vertex] (a) at (90:1) {};
    \node[vertex] (b) at (162:1) {};
    \node[vertex] (c) at (234:1) {};
    \node[vertex] (d) at (306:1) {};
    \node[vertex] (e) at (18:1) {};

    \node[vertex] (f) at (36:1.6) {};
    \node[vertex] (g) at (18:1.6) {};
    \node[vertex] (h) at (0:1.6) {};

    \foreach \i/\j in {a/b, a/c, a/d, b/c, b/d, c/d}
        \draw (\i)--(\j);

    \foreach \i in {a,b,c,d}
        \draw (e)--(\i);

    \foreach \i in {f,g,h}
        \draw (e)--(\i);

\end{tikzpicture}
\caption{$K_5^3$}
\end{subfigure}
\quad
\begin{subfigure}{0.3\textwidth}
\centering
\begin{tikzpicture}[scale=0.8, 
    vertex/.style={circle, draw, fill=black, inner sep=1.3pt}]

    \node[vertex] (v1) at (90:1) {};
    \node[vertex] (v2) at (162:1) {};
    \node[vertex] (v3) at (234:1) {};
    \node[vertex] (v4) at (306:1) {};
    \node[vertex] (v5) at (18:1) {};

    \foreach \i/\j in {v1/v2, v1/v3, v1/v4, v1/v5, v2/v3, v2/v4, v2/v5, v3/v4, v3/v5, v4/v5}
        \draw (\i)--(\j);

    \def\shift{2.5}
    \node[vertex] (w1) at ($(90:1)+(\shift,0)$) {};
    \node[vertex] (w2) at ($(162:1)+(\shift,0)$) {};
    \node[vertex] (w3) at ($(234:1)+(\shift,0)$) {};
    \node[vertex] (w4) at ($(306:1)+(\shift,0)$) {};
    \node[vertex] (w5) at ($(18:1)+(\shift,0)$) {};

    \foreach \i/\j in {w1/w2, w1/w3, w1/w4, w1/w5, w2/w3, w2/w4, w2/w5, w3/w4, w3/w5, w4/w5}
        \draw (\i)--(\j);

    \draw (v5)--(w2);
    \draw (v4)--(w3);

\end{tikzpicture}
\caption{$H_5^2$}
\end{subfigure}
\hfill
\begin{subfigure}{0.3\textwidth}
\centering
\begin{tikzpicture}[scale=0.8, 
    vertex/.style={circle, draw, fill=black, inner sep=1.3pt}]

    \node[vertex] (v1) at (90:1) {};
    \node[vertex] (v2) at (162:1) {};
    \node[vertex] (v3) at (234:1) {};
    \node[vertex] (v4) at (306:1) {};
    \node[vertex] (v5) at (18:1) {};

    \foreach \i/\j in {v1/v2, v1/v3, v1/v4, v1/v5, v2/v3, v2/v4, v2/v5, v3/v4, v3/v5, v4/v5}
        \draw (\i)--(\j);

    \def\shift{2.5}
    \node[vertex] (w1) at ($(90:1)+(\shift,0)$) {};
    \node[vertex] (w2) at ($(162:1)+(\shift,0)$) {};
    \node[vertex] (w3) at ($(234:1)+(\shift,0)$) {};
    \node[vertex] (w4) at ($(306:1)+(\shift,0)$) {};
    \node[vertex] (w5) at ($(18:1)+(\shift,0)$) {};

    \foreach \i/\j in {w1/w2, w1/w3, w1/w4, w1/w5, w2/w3, w2/w4, w2/w5, w3/w4, w3/w5, w4/w5}
        \draw (\i)--(\j);

    \draw (v5)--(w2);
    \draw (v5)--(w3);

\end{tikzpicture}
\caption{$KK_5^2$}
\end{subfigure}

\begin{subfigure}{0.7\textwidth}
\centering
\begin{tikzpicture}[
    scale=0.6,
    vertex/.style={circle, draw, fill=black, inner sep=1.3pt},
    internaledge/.style={black, line width=1pt},
    joinL/.style={blue!70, line width=0.8pt},
    joinR/.style={red!70, line width=0.8pt}
]

%-------------------------
% K3 central
%-------------------------
\node[vertex] (v1) at (0,0) {};
\node[vertex] (v2) at (-1,-1.4) {};
\node[vertex] (v3) at (1,-1.4) {};
\draw[internaledge] (v1)--(v2)--(v3)--(v1);

%-------------------------
% K3 esquerdo
%-------------------------
\node[vertex] (u1) at (-3,1.5) {};
\node[vertex] (u2) at (-4,0) {};
\node[vertex] (u3) at (-2,0) {};
\draw[internaledge] (u1)--(u2)--(u3)--(u1);

%-------------------------
% K3 direito
%-------------------------
\node[vertex] (w1) at (3,1.5) {};
\node[vertex] (w2) at (4,0) {};
\node[vertex] (w3) at (2,0) {};
\draw[internaledge] (w1)--(w2)--(w3)--(w1);

%-------------------------
% Join esquerdo (AZUL)
%-------------------------
\foreach \v in {v1,v2,v3}{
  \foreach \u in {u1,u2,u3}{
    \draw[joinL] (\v) to[bend left=10] (\u);
  }
}

%-------------------------
% Join direito (VERMELHO)
%-------------------------
\foreach \v in {v1,v2,v3}{
  \foreach \w in {w1,w2,w3}{
    \draw[joinR] (\v) to[bend right=10] (\w);
  }
}

\end{tikzpicture}
\caption{$\Theta(3,2,3)$}
\end{subfigure}

\caption{Examples of some families of graphs.}
\label{fig:graphs}
\end{figure}

The adjacency matrix $A(G)=[a_{ij}]$ is the $n \times n$ matrix with entries $a_{ij}=1$ if $\{v_i,v_j\}\in E(G)$ and $a_{ij}=0,$ otherwise;
and the degree matrix $D(G)=[d_{ij}]$ is defined by $ d_{ij}= d(v_i)$,  if  $i=j,$ and $0$, otherwise. The diagonal matrix $D(G)$ can be written as $D(G)=\mathrm{diag}(d(v_1), d(v_2), \ldots, d(v_n))$. The Laplacian and signless Laplacian matrices of $G$ are $L(G)=D(G)-A(G)$ and $Q(G)=D(G)+A(G)$, respectively, and all $L(G)-$eigenvalues are nonnegative, with the smallest is equal to $0$. 
Denoted by $j_n$ is a vector whose entries are all equal to $1$. This vector is an eigenvector of $L(G)$ associated with the eigenvalue zero.

In recent years, increasing attention has been given to convex linear combinations of graph-associated matrices, which provide a unified for to study properties of some spectral matrices at the same time. These aforementioned matrices can be seen at \cite{Wang2020, Nikiforov2017, CUI20191, Samanta2024}.

In 2017, Nikiforov (\cite{Nikiforov2017}) introduced the first convex linear combination, $A_{\alpha},$ defined the following way:
\[
A_\alpha(G) = \alpha D(G) + (1-\alpha)A(G),
\]
for all $\alpha \in [0,1]$. 

\citeauthor{Wang2020} (\cite{Wang2020}) introduced the other linear convex combination
\[
L_\alpha(G) = \alpha D(G) + (\alpha - 1)A(G), \quad \forall \alpha \in [0,1],
\]
which includes the Laplacian matrix that was not in the convex linear combination $A_{\alpha}$. This family contains $A(G)$, $D(G)$, and $L(G)$; thus, studying $L_\alpha(G)$ unifies the theory of these matrices.

In this paper, we investigate the spectral properties of  $L_\alpha(G)$ under various graphs operations and across different families of graphs. The paper is organized as follows: Section~\ref{sec2} presents the definitions and results required for the development of this work. In Section~\ref{sec3} the main results are presented and in Section~\ref{sec4} our conclusions and future works.

\section{Preliminaries}\label{sec2}

Let \( M_{m,n}(\mathbb{R}) \), or simply \( M_{m,n} \), denote the vector space of real \( m \times n \) matrices, and write \( M_n \) when \( m=n \). We denote by \( I_n \) the identity matrix, \( \bigjota_{m,n} \) the all-ones matrix, \( \bigzero_{m,n} \) the zero matrix, and \( j_n \in \mathbb{R}^n \) the all-ones vector. The roots of the characteristic polynomial of $M,$ $P_M(x),$ are called the $M-$eigenvalues,
and denoted by \( \lambda_1(M) \ge \cdots \ge \lambda_n(M) \). The algebraic multiplicity of \( \lambda_i(M) \) is denoted by \( s_i \), and the $M-$spectrum, the set of eigenvalues of $M$, is denoted by \( \mathrm{Spec}(M) \). The spectral radius is $\rho(M)=\max\{|\lambda|:\ \lambda \in \mathrm{Spec}(M)\}.$ The following result is a standard consequence of the fact that a nonzero polynomial has only finitely many roots.

\begin{proposition}[\cite{lang2002algebra, dummit2004abstract}]\label{prop:identidadepolinomial}
Let $p(\lambda)$ and $q(\lambda)$ be polynomials with real coefficients. Suppose there exists an infinite set $A \subset \mathbb{R}$ such that $p(\lambda)=q(\lambda), \quad \forall\, \lambda \in A.$ Then
\[
p(\lambda)\equiv q(\lambda),
\]
that is, $p$ and $q$ are identical polynomials.
\end{proposition}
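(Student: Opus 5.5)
The plan is to reduce the statement to the finiteness of the root set of a nonzero polynomial. Set $r(\lambda)=p(\lambda)-q(\lambda)$. This is again a polynomial with real coefficients, because the real polynomials are closed under subtraction. By hypothesis $r(\lambda)=0$ for every $\lambda\in A$, so every element of the infinite set $A$ is a root of $r$. It suffices to show that $r$ is the zero polynomial, since then $p$ and $q$ have identical coefficients.

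I will argue by contradiction. Suppose $r\not\equiv 0$, and let $d=\deg r\ge 0$. I claim a nonzero real polynomial of degree $d$ has at most $d$ distinct real roots, and I will prove this by induction on $d$.

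\begin{itemize}
\item If $d=0$, then $r$ is a nonzero constant and has no roots.
\item If $d\ge 1$ and $a$ is a root, division with remainder in $\mathbb{R}[\lambda]$ gives $r(\lambda)=(\lambda-a)s(\lambda)+c$ with $c\in\mathbb{R}$. Evaluating at $\lambda=a$ gives $c=0$, so $r(\lambda)=(\lambda-a)s(\lambda)$ with $\deg s=d-1$ and $s\not\equiv 0$.
\item Any other root $b\neq a$ satisfies $(b-a)s(b)=0$. Since $\mathbb{R}$ is a field, hence has no zero divisors, this forces $s(b)=0$.
\item By the induction hypothesis $s$ has at most $d-1$ distinct roots, so $r$ has at most $d$.
\end{itemize}

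It follows that the root set of $r$ has at most $d$ elements. But this root set contains the infinite set $A$, which is a contradiction. Hence $r\equiv 0$, that is, $p\equiv q$.

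The only step with real content is the root bound. Its one delicate point is the use of the factor theorem together with the absence of zero divisors in $\mathbb{R}$; everything else is bookkeeping. In the paper this will be used to extend determinant identities from an infinite set of $\lambda$ values, for example where some block matrix is invertible, to all $\lambda$.
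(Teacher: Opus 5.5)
Your proof is correct and takes the same route the paper indicates: the paper gives no detailed argument, only calling the statement a standard consequence of the fact that a nonzero polynomial has finitely many roots, which is exactly your reduction to $r=p-q$. You also prove that root bound by the factor-theorem induction, which the paper leaves to the cited textbooks.
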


\begin{corollary}[\cite{lang2002algebra, dummit2004abstract}]\label{cor:fora_do_finito}
Let $p(\lambda)$ and $q(\lambda)$ be polynomials. If $p(\lambda)=q(\lambda)$ for all $\lambda$ outside a finite set, then
\[
p(\lambda)\equiv q(\lambda).
\]
\end{corollary}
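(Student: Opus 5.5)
The plan is to reduce Corollary~\ref{cor:fora_do_finito} directly to Proposition~\ref{prop:identidadepolinomial}. Let $F\subset\mathbb{R}$ be the finite exceptional set, and take $A=\mathbb{R}\setminus F$. We know that $p(\lambda)=q(\lambda)$ for every $\lambda\in A$. If we show that $A$ is infinite, the proposition applies verbatim and yields $p\equiv q$.

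The only step with any content is to check that $A$ is infinite. I would argue by contradiction. If $A$ were finite, then $\mathbb{R}=A\cup F$ would be a union of two finite sets, hence finite, which is absurd because $\mathbb{R}$ is infinite; for instance, it contains the infinite set $\mathbb{N}$. So $A$ is infinite.

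Should one prefer not to lean on the proposition as a black box, the argument can be made self-contained. Consider $r=p-q$. It vanishes on the infinite set $A$. If $r$ were nonzero, of degree $d$, it would have at most $d$ roots, which is a contradiction. Hence $r$ is the zero polynomial, i.e.\ $p\equiv q$.

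There is no real obstacle here. The one point deserving a remark is that the polynomials are tacitly taken to have real coefficients, or at least to be evaluated at real $\lambda$, so that the complement is formed inside $\mathbb{R}$, as in the proposition.
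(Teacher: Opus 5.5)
Your proposal is correct and follows the paper's route: the paper gives no separate proof and presents the statement as an immediate corollary of Proposition~\ref{prop:identidadepolinomial}, which is exactly your reduction via the infinite set $\mathbb{R}\setminus F$. Your self-contained root-counting argument for $p-q$ is the standard fact underlying that proposition, so nothing is missing.
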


For \( M \in M_{m,n} \) and index sets \( \alpha \subseteq \{1,\ldots,m\} \), \( \beta \subseteq \{1,\ldots,n\} \), we denote by \( M[\alpha,\beta] \) the submatrix formed by the rows indexed by \( \alpha \) and the columns indexed by \( \beta \). A matrix \( M \in M_n \) is said to be partitioned into blocks if it can be written in block form as $M = (B_{ij}),$
where each \( B_{ij} \) is a submatrix of \( M \) called ``block''. The following result involves the spectrum of a block matrix.

\begin{theorem}[\cite{horn_johnson_2013}] \label{theo:union}
Let \( P = \begin{bmatrix} A & B \\ 0 & C \end{bmatrix} \) be a block matrix of order \( n \), where \( A \) and \( C \) are square. Then
\[
\mathrm{Spec}(P) = \mathrm{Spec}(A) \cup \mathrm{Spec}(C).
\]
\end{theorem}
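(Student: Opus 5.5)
The plan is to prove the identity at the level of characteristic polynomials, $\det(\lambda I_n - P) = \det(\lambda I_{k} - A)\,\det(\lambda I_{n-k} - C)$, where $A$ has order $k$. Equality of these polynomials gives equality of their roots counted with multiplicity, so $\mathrm{Spec}(P)$ is $\mathrm{Spec}(A)$ together with $\mathrm{Spec}(C)$, multiplicities included. The first step is to note that
\[
\lambda I_n - P = \begin{bmatrix} \lambda I_k - A & -B \\ 0 & \lambda I_{n-k} - C \end{bmatrix}
\]
is again block upper triangular. Hence it suffices to prove the determinant identity $\det\begin{bmatrix} X & Y \\ 0 & Z\end{bmatrix} = \det X \det Z$ for square $X$, $Z$ and arbitrary $Y$.

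For the determinant identity I would use the factorization argument together with the polynomial-identity tools already stated. Suppose first that $X$ is invertible. Then
\[
\begin{bmatrix} X & Y \\ 0 & Z\end{bmatrix} = \begin{bmatrix} X & 0 \\ 0 & I\end{bmatrix}\begin{bmatrix} I & X^{-1}Y \\ 0 & Z\end{bmatrix}.
\]
The first factor has determinant $\det X$, by expansion along the last rows, which are rows of the identity. The second factor has determinant $\det Z$, by repeated Laplace expansion along its first columns, each of which is a standard basis vector. Multiplicativity of the determinant then gives the identity whenever $X$ is invertible.

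In our application $X = \lambda I_k - A$ is invertible for every $\lambda$ outside the finite set $\mathrm{Spec}(A)$. So the two polynomials $\det(\lambda I_n - P)$ and $\det(\lambda I_k - A)\det(\lambda I_{n-k}-C)$ agree outside a finite set. Corollary~\ref{cor:fora_do_finito} then upgrades this to an identity of polynomials, which finishes the proof. Alternatively, one could avoid the invertibility detour by applying the Leibniz formula directly: any permutation sending some index in the lower block of rows to the upper block of columns picks up a zero entry, so only permutations preserving both blocks contribute, and these factor as products of permutations of each block.

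The only delicate point is the step where $X$ fails to be invertible. The polynomial-identity corollary handles this cleanly, and that is exactly why it was recorded just before the theorem. Otherwise the argument is routine.
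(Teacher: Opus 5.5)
The paper gives no proof of this statement: it quotes it from Horn and Johnson as a standard fact and uses it only in the block-diagonal case $B=0$ (Theorem~\ref{theo:spec_union}). Your argument is a correct, self-contained proof. The factorization
$\begin{bmatrix} X & Y \\ 0 & Z\end{bmatrix}=\begin{bmatrix} X & 0 \\ 0 & I\end{bmatrix}\begin{bmatrix} I & X^{-1}Y \\ 0 & Z\end{bmatrix}$
is right, and both cofactor evaluations are right. Since $\lambda I_k-A$ is singular for only finitely many $\lambda$, Corollary~\ref{cor:fora_do_finito} legitimately extends the identity to all $\lambda$. That step is not cosmetic: the values $\lambda\in\mathrm{Spec}(A)$, where your factorization breaks down, are exactly those at which you must show $\lambda I_n-P$ is singular. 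Compared with the bare citation, your route gives the polynomial identity $\det(\lambda I_n-P)=\det(\lambda I_k-A)\det(\lambda I_{n-k}-C)$. That is the multiset form of the statement, which is what is needed whenever spectra are listed with multiplicities, as the paper does later. One simplification: the ``delicate point'' only arises because you used $X^{-1}$. The factorization
\[
\begin{bmatrix} X & Y \\ 0 & Z\end{bmatrix}
=\begin{bmatrix} I & 0 \\ 0 & Z\end{bmatrix}
\begin{bmatrix} I & Y \\ 0 & I\end{bmatrix}
\begin{bmatrix} X & 0 \\ 0 & I\end{bmatrix}
\]
holds for arbitrary $X$, and the middle factor is unit upper triangular. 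It therefore gives $\det\begin{bmatrix} X & Y\\ 0 & Z\end{bmatrix}=\det Z\,\det X$ directly, with no polynomial-identity step. Your Leibniz alternative avoids that step as well.
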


Theorem~\ref{equitable} is a fundamental tool in Spectral Graph Theory, as it enables a significant reduction of spectral complexity. In particular, by Theorem~\ref{quociente}, the spectrum of the quotient matrix captures essential eigenvalue information of the original matrix.

\begin{theorem}[\cite{horn_johnson_2013}]\label{equitable}

Let \( A \) be a symmetric $n\times n$ matrix written in block form
\[
A = 
\begin{bmatrix}
A_{1,1} & A_{1,2} & \cdots & A_{1,k} \\
A_{2,1} & A_{2,2} & \cdots & A_{2,k} \\
\vdots & \vdots & \ddots & \vdots \\
A_{k,1} & A_{k,2} & \cdots & A_{k,k}
\end{bmatrix},
\]
where \( A_{i,j} \) is of order \( n_i \times n_j \) and each of its rows has constant sum \( c_{i,j} \). If \(M = [c_{i,j}]_{k,k}\), then the eigenvalues of \( M \) are also eigenvalues of \( A \). The matrix $M$ is called the quotient matrix of $A$.
\end{theorem}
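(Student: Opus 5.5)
The plan is to prove Theorem~\ref{equitable} by showing that every eigenvector of the quotient matrix $M$ lifts to an eigenvector of $A$ that is constant on each block of the partition. Write $n = n_1 + \cdots + n_k$ and index the coordinates of $\mathbb{R}^n$ consistently with the block partition. Define the $n \times k$ characteristic matrix
\[
S = \begin{bmatrix} j_{n_1} & \bigzero_{n_1,1} & \cdots & \bigzero_{n_1,1} \\ \bigzero_{n_2,1} & j_{n_2} & \cdots & \bigzero_{n_2,1} \\ \vdots & & \ddots & \vdots \\ \bigzero_{n_k,1} & \bigzero_{n_k,1} & \cdots & j_{n_k} \end{bmatrix},
\]
so that column $i$ of $S$ is the indicator vector of the $i$-th block.

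The key identity will be
\[
AS = SM.
\]
To check it, look at block row $i$ and block column $j$. On the left, this block is $A_{i,j} j_{n_j}$. By hypothesis every row of $A_{i,j}$ sums to $c_{i,j}$, so $A_{i,j} j_{n_j} = c_{i,j} j_{n_i}$. On the right, block $(i,j)$ of $SM$ is $j_{n_i}$ times the scalar $M_{i,j} = c_{i,j}$. The two sides therefore agree block by block.

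Now let $\lambda$ be an eigenvalue of $M$ with eigenvector $x \neq 0$. Put $y = Sx$. The columns of $S$ have disjoint supports and are nonzero, so $S$ has full column rank $k$, and hence $y \neq 0$. Then
\[
Ay = ASx = SMx = \lambda Sx = \lambda y,
\]
so $\lambda$ is an eigenvalue of $A$.

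The one point needing care is that $M$ need not be symmetric, so $\lambda$ and $x$ may a priori be complex. The argument above works verbatim over $\mathbb{C}$, because $S$ is real with full column rank, so $y = Sx \neq 0$ still holds. Since $A$ is symmetric, $\lambda$ is then real, and so is every eigenvalue of $M$.

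I expect no real obstacle. The only delicate points are the block-wise verification of $AS = SM$ and the injectivity of $S$. If one also wants multiplicities, the same argument shows that $S$ maps each eigenspace of $M$ injectively into the corresponding eigenspace of $A$. Symmetry of $A$ is not actually needed for the inclusion of eigenvalues; it only serves to guarantee that the eigenvalues of $M$ are real.
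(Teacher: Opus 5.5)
The paper gives no proof of this statement; it is quoted from Horn--Johnson and used as a black box. Your argument, which lifts eigenvectors of $M$ through the characteristic matrix $S$ using $AS=SM$, is the standard proof and is correct and complete, and you handle properly the injectivity of $S$, the possibly complex eigenvectors of the nonsymmetric $M$, and the fact that symmetry of $A$ only serves to make the eigenvalues real.

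Your multiplicity remark can be sharpened. Because $\mathrm{col}(S)$ is $A$-invariant and $A$ acts on it through $M$, completing the columns of $S$ to a basis puts $A$ in block upper triangular form with diagonal block $M$. Hence $P_M$ divides $P_A$, so algebraic multiplicities are inherited, not only geometric ones. That stronger form is what is needed to count quotient eigenvalues with multiplicity, as the paper tacitly does when it assembles full spectra.
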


\begin{theorem}[\cite{YOU2019}] \label{quociente}
Let $M$ be a quotient matrix of $A$ as in Theorem~\ref{equitable}. Then
\begin{itemize}
\item[(i)] $\mathrm{Spec}(M) \subseteq \mathrm{Spec}(A)$;
\item[(ii)] The largest eigenvalue of $M$ equals the largest eigenvalue of $A$ if $A$ is nonnegative (i.e., all entries of $A$ are nonnegative).
\end{itemize}
\end{theorem}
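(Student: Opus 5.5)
Theorem~\ref{quociente} has two parts, and I will prove them separately. Part (i) is the content of Theorem~\ref{equitable}, but I will give a self-contained argument through the characteristic matrix. Let $P\in M_{n,k}$ be the characteristic matrix of the partition: column $j$ of $P$ is the indicator vector of the $j$-th block of indices, of size $n_j$. The row-sum hypothesis says exactly that $A_{i,j}j_{n_j}=c_{i,j}j_{n_i}$ for every $i,j$. In matrix form this reads $AP=PM$.

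For (i), let $Mx=\lambda x$ with $x\neq 0$. Then $A(Px)=PMx=\lambda Px$. The columns of $P$ have disjoint nonempty supports, so $P$ has full column rank and $Px\neq 0$. Hence $\lambda$ is an eigenvalue of $A$. To also account for multiplicities, I would extend the columns of $P$ to a basis of $\mathbb{R}^n$. Since $\mathrm{col}(P)$ is $A$-invariant, $A$ becomes block upper triangular in this basis, with diagonal block $M$. By Theorem~\ref{theo:union} (applied to $S^{-1}AS$), $P_M(x)$ divides $P_A(x)$. This gives $\mathrm{Spec}(M)\subseteq\mathrm{Spec}(A)$.

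For (ii), I assume $A$ is nonnegative, as in the statement. Then every $c_{i,j}\ge 0$, so $M$ is nonnegative. By Perron--Frobenius for nonnegative matrices, $\rho(M)$ is an eigenvalue of $M$ with a nonnegative eigenvector $x\neq 0$. Since $A$ is symmetric and nonnegative, its largest eigenvalue equals $\rho(A)$. By (i), $\rho(M)\le\lambda_1(A)=\rho(A)$. For the reverse inequality I would use the symmetric form of $M$. Let $\Lambda=P^{T}P=\mathrm{diag}(n_1,\dots,n_k)$. Then $\Lambda M=P^TAP$ is symmetric, so $B=\Lambda^{1/2}M\Lambda^{-1/2}=\Lambda^{-1/2}P^TAP\Lambda^{-1/2}$ is symmetric and has the same spectrum as $M$. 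The matrix $Q=P\Lambda^{-1/2}$ has orthonormal columns, $B=Q^TAQ$, and $AQ=QB$, so $\mathrm{col}(Q)$ is $A$-invariant.

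Next, let $v\ge 0$ be a Perron eigenvector of $A$ for $\rho(A)$. The entry sums of $v$ over each block are nonnegative, and they are not all zero. Therefore $y=Q^Tv\neq 0$. Because $\mathrm{col}(Q)$ is $A$-invariant and $A$ is symmetric, its orthogonal complement is also $A$-invariant, so the projection $QQ^T$ commutes with $A$. It follows that $By=Q^TAQQ^Tv=Q^TQQ^TAv=\rho(A)\,y$. So $\rho(A)$ is an eigenvalue of $B$, hence of $M$, and combined with the previous inequality this gives $\lambda_1(M)=\lambda_1(A)$.

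The main obstacle is this last step, showing that the Perron eigenvalue of $A$ is not lost when projecting onto the block space. It rests on two facts: the Perron vector is nonnegative, so its block sums cannot all cancel, and the symmetry of $A$ makes the projection $QQ^T$ commute with $A$. If symmetry were not available, I would instead work with the quotient of $A^T$, or use a positive left Perron vector of $A$ in the irreducible case.
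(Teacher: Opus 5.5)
Your proof is correct. The paper gives no proof of this statement to compare against: it is quoted from \cite{YOU2019}. Later sections use only part (i), since the $L_\alpha$-matrices it is applied to have off-diagonal entries $\alpha-1\le 0$, so (ii) is never invoked. Your intertwining relation $AP=PM$ is the standard starting point, and it yields (i) exactly as you describe. One small correction: the divisibility $P_M\mid P_A$ comes from the determinant of the block upper triangular matrix $S^{-1}AS$, not from the set-level Theorem~\ref{theo:union}. This does not matter here, because the set inclusion the statement asks for is already settled by your eigenvector argument.

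For (ii), your transfer of the Perron vector is valid but uses more machinery than needed. The symmetrization $B=\Lambda^{1/2}M\Lambda^{-1/2}$, the commuting projection $QQ^T$ and Perron--Frobenius applied to $M$ can all be dropped:
\begin{itemize}
\item Transposing $AP=PM$ and using $A=A^T$ gives $P^TA=M^TP^T$, so $M^T(P^Tv)=P^TAv=\rho(A)\,P^Tv$.
\item $P^Tv\ne 0$, because its entries are the block sums of $v\ge 0$, $v\ne 0$.
\item Hence $\rho(A)\in\mathrm{Spec}(M^T)=\mathrm{Spec}(M)$.
\item By (i), every eigenvalue of $M$ is a real eigenvalue of $A$ and so is at most $\lambda_1(A)=\rho(A)$. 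Therefore $\lambda_1(M)=\lambda_1(A)$.
\end{itemize}

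The same argument also tightens your closing remark on the non-symmetric case. You need neither a quotient of $A^T$ (the partition need not have constant block row sums for $A^T$) nor irreducibility. Any nonnegative $A$ has a nonnegative left eigenvector $w\ne 0$ for $\rho(A)$. Then $(w^TP)M=w^TAP=\rho(A)\,w^TP$ with $w^TP\ne 0$, which gives $\rho(M)=\rho(A)$ for every nonnegative $A$, symmetric or not. Your version relies on the symmetry hypothesis of Theorem~\ref{equitable}, which is enough for the statement as posed here.
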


The following results concern the determinant of block matrices.

\begin{theorem}[\cite{horn_johnson_2013}] \label{determinante}Let \( A, B, C, D \in \mathbb{M}_n \) be block matrices . Then
    \[
    \det
    \begin{bmatrix}
    A & B \\
    C & D
    \end{bmatrix} =
    \begin{cases}
    \det(AD - CB), & \text{se } AC = CA; \\
    \det(DA - CB), & \text{se } AB = BA; \\
    \det(AD - BC), & \text{se } DC = CD; \\
    \det(DA - BC), & \text{se } DB = BD.
    \end{cases}
    \]
\end{theorem}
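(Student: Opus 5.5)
We plan to prove the four determinant identities in Theorem~\ref{determinante} by block Gaussian elimination, first in the case where the relevant block is invertible, and then removing that hypothesis with a perturbation and the polynomial identity principle (Proposition~\ref{prop:identidadepolinomial} and Corollary~\ref{cor:fora_do_finito}).

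\emph{Case $AC=CA$ with $A$ invertible.} Left-multiplying by the block lower-triangular matrix with diagonal blocks $I_n$ and off-diagonal block $-CA^{-1}$ gives
\[
\begin{bmatrix} I_n & \bigzero_{n,n} \\ -CA^{-1} & I_n \end{bmatrix}\begin{bmatrix} A & B \\ C & D \end{bmatrix}=\begin{bmatrix} A & B \\ \bigzero_{n,n} & D-CA^{-1}B \end{bmatrix}.
\]
The left factor has determinant $1$. The block-triangular structure (as in Theorem~\ref{theo:union}, at the level of determinants) then gives
\[
\det M=\det A\cdot\det(D-CA^{-1}B)=\det(AD-ACA^{-1}B).
\]
Since $AC=CA$, we have $ACA^{-1}=C$, so $\det M=\det(AD-CB)$.

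\emph{Removing invertibility.} Replace $A$ by $A_t=A+tI_n$. It still commutes with $C$, and it is invertible for all but finitely many $t$, namely for $-t$ outside the eigenvalues of $A$. Both $\det\begin{bmatrix} A_t & B \\ C & D\end{bmatrix}$ and $\det(A_tD-CB)$ are polynomials in $t$. They agree off a finite set, so by Corollary~\ref{cor:fora_do_finito} they are identical. Setting $t=0$ gives the claim.

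\emph{The other three cases.} These follow the same way with a different pivot block or side of multiplication:
\begin{itemize}
\item[(i)] For $AB=BA$, right-multiply by the matrix with diagonal blocks $I_n$ and off-diagonal block $-A^{-1}B$. This gives $\det A\det(D-CA^{-1}B)=\det(DA-CA^{-1}BA)=\det(DA-CB)$, where we multiply $\det A$ on the right and use $A^{-1}BA=B$.
\item[(ii)] For $DC=CD$, eliminate using $D$ as pivot. This gives $\det M=\det(A-BD^{-1}C)\det D$. Writing it as $\det(AD-BD^{-1}CD)$ and using $D^{-1}CD=C$ yields $\det(AD-BC)$.
\item[(iii)] For $DB=BD$, place $\det D$ on the left: $\det(DA-DBD^{-1}C)=\det(DA-BC)$, using $DBD^{-1}=B$.
\end{itemize}
In each case the perturbation $D+tI_n$ (or $A+tI_n$) removes the invertibility assumption exactly as in the first case.

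\emph{Main obstacle.} The only delicate step is the ordering of factors. In each case, $\det(\text{pivot})$ must be absorbed on the correct side of the Schur complement so that the commutation hypothesis cancels the inverse. For example, with $AB=BA$ we must write $\det(D-CA^{-1}B)\det A=\det\bigl((D-CA^{-1}B)A\bigr)$ rather than multiplying on the left. Once the side is chosen correctly, the perturbation argument is routine.
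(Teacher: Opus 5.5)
Your proof is correct. The paper does not prove this statement itself; it only cites Horn and Johnson, so there is no in-paper argument to compare against. What you give is the standard self-contained proof: a Schur complement with an invertible pivot, then removing the invertibility assumption by replacing the pivot $P$ with $P+tI_n$ (which still commutes with the relevant block) and applying Corollary~\ref{cor:fora_do_finito} to two polynomials in $t$.

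I checked all four factor orderings, and each gives exactly the stated formula:
\begin{itemize}
\item[(a)] From $\det M=\det A\,\det(D-CA^{-1}B)$, you absorb $\det A$ on the left when $AC=CA$, using $ACA^{-1}=C$.
\item[(b)] From the same expression, you absorb $\det A$ on the right when $AB=BA$, using $A^{-1}BA=B$.
\item[(c)] From $\det M=\det(A-BD^{-1}C)\,\det D$, you absorb $\det D$ on the right when $DC=CD$, using $D^{-1}CD=C$.
\item[(d)] From the same expression, you absorb $\det D$ on the left when $DB=BD$, using $DBD^{-1}=B$.
\end{itemize}

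One small point of citation. Theorem~\ref{theo:union} is a statement about spectra as sets, so it does not by itself give the determinant identity you use, because multiplicities are invisible in a set union. What you actually need is $\det\begin{bmatrix} X & Y \\ \bigzero_{n,n} & Z\end{bmatrix}=\det X\det Z$, together with its block lower-triangular analogue; your right multiplication in case (i) produces a lower-triangular form. This fact is standard, for example by Laplace expansion, so it is a matter of citing the right result rather than a gap.
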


\begin{proposition}[\cite{horn_johnson_2013}]\label{prop:schur-scalar}
Let
\[
M=
\begin{bmatrix}
a & x^T\\
x & B
\end{bmatrix},
\]
where $a\in\mathbb{R}$, $x$ is a column vector, and $B$ is an invertible square matrix. Then
\[
\det(M)=\det(B)\bigl(a-x^TB^{-1}x\bigr).
\]
\end{proposition}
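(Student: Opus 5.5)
This is the standard Schur complement formula for the bordered matrix $M$ with invertible block $B$. I will prove it by block Gaussian elimination, factoring $M$ into triangular pieces whose determinants are easy to read off.

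\textbf{Step 1: the factorization.} Since $B$ is invertible, we can clear the row vector $x^T$ by subtracting $x^TB^{-1}$ times the lower block row from the first row. In matrix form this is the identity
\[
\begin{bmatrix}
1 & -x^TB^{-1}\\
\bigzero & I
\end{bmatrix}
\begin{bmatrix}
a & x^T\\
x & B
\end{bmatrix}
=
\begin{bmatrix}
a-x^TB^{-1}x & \bigzero\\
x & B
\end{bmatrix}.
\]
Here $I$ has the size of $B$, and the zero blocks have the appropriate shapes.

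\textbf{Step 2: verify the identity.} It suffices to multiply out block by block. The $(1,1)$ entry is $a - x^TB^{-1}x$. The $(1,2)$ block is $x^T - x^TB^{-1}B = \bigzero$. The lower block row of the left factor is $[\,\bigzero \;\; I\,]$, so it reproduces $[\,x \;\; B\,]$ unchanged.

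\textbf{Step 3: take determinants.} The left factor is upper block triangular with diagonal blocks $1$ and $I$, so its determinant is $1$. The right-hand side is lower block triangular with diagonal blocks $a-x^TB^{-1}x$ and $B$. Applying Theorem~\ref{theo:union} to its transpose, which does not change the determinant, its determinant is the product of the determinants of the diagonal blocks, namely $\det(B)\,(a-x^TB^{-1}x)$. Multiplicativity of the determinant then gives $\det(M)=\det(B)\bigl(a-x^TB^{-1}x\bigr)$.

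\textbf{On the block-triangular step.} Theorem~\ref{theo:union} is stated for spectra, not determinants. The determinant of a block-triangular matrix equals the product of the determinants of its diagonal blocks; this follows either from the spectral statement counted with algebraic multiplicity, or directly from Laplace expansion along the first row. In our case the first row of the right-hand side is $(a-x^TB^{-1}x,0,\dots,0)$, so expansion along it gives the result immediately. No step here is a genuine obstacle.
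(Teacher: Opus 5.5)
Your proof is correct and complete. It is the standard Schur-complement (block Gaussian elimination) argument; the paper gives no proof of its own and only cites Horn and Johnson, where the result rests on the same kind of factorization. For the triangular determinants, rely on your Laplace expansions (along the first row of the right-hand side and the first column of the left factor) rather than Theorem~\ref{theo:union}, which as stated is an equality of sets of eigenvalues and does not record the algebraic multiplicities needed to read off a determinant.
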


\begin{corollary}[\cite{horn_johnson_2013}]\label{cor:schur-blockdiag}
Let $X$ and $Y$ be invertible square matrices, $\nu$ and $\rho$ be compatible column vectors, and let $a\in\mathbb{R}$. Then
\[
\det
\begin{bmatrix}
a & \nu^T & \rho^T\\
\nu & X & 0\\
\rho & 0 & Y
\end{bmatrix}
=
\det(X)\det(Y)\Bigl(a-\nu^TX^{-1}\nu-\rho^TY^{-1}\rho\Bigr).
\]
\end{corollary}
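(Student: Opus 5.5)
This is a Schur-complement identity, so the plan is to reduce it to Proposition~\ref{prop:schur-scalar} by treating the two invertible diagonal blocks as one block-diagonal matrix. Set
\[
B=\begin{bmatrix} X & 0\\ 0 & Y\end{bmatrix},\qquad x=\begin{bmatrix}\nu\\ \rho\end{bmatrix}.
\]
With this notation the matrix in the statement is exactly
\[
\begin{bmatrix} a & x^T\\ x & B\end{bmatrix}.
\]
This is the form required by Proposition~\ref{prop:schur-scalar}.

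Next I check the hypotheses. Since $B$ is block upper triangular with a zero off-diagonal block, Theorem~\ref{theo:union} (or the elementary fact that the determinant of a block-diagonal matrix factors) gives $\det(B)=\det(X)\det(Y)$. This is nonzero because $X$ and $Y$ are invertible, so $B$ is invertible. Its inverse is
\[
B^{-1}=\begin{bmatrix} X^{-1} & 0\\ 0 & Y^{-1}\end{bmatrix},
\]
which one verifies directly by multiplying out $BB^{-1}=I$.

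Applying Proposition~\ref{prop:schur-scalar} then yields $\det(X)\det(Y)\bigl(a-x^TB^{-1}x\bigr)$. The only remaining computation is the quadratic form. Block multiplication gives
\[
x^TB^{-1}x=\nu^TX^{-1}\nu+\rho^TY^{-1}\rho,
\]
because the off-diagonal zero blocks kill the cross terms $\nu^T(\cdot)\rho$. Substituting this into the previous expression gives the claimed formula.

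There is no real obstacle here. The only point that needs care is the bookkeeping: the dimensions of $\nu$ and $\rho$ must match the orders of $X$ and $Y$ (this is what "compatible" means in the statement), so that the block products are defined.
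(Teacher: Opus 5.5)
Your proposal is correct and matches how the paper treats this result: the paper gives no separate proof and presents it (citing Horn and Johnson) as a direct corollary of Proposition~\ref{prop:schur-scalar}, which is exactly your reduction via $B=\mathrm{diag}(X,Y)$ and $x=(\nu^T,\rho^T)^T$. One small point: Theorem~\ref{theo:union} only equates spectra as sets, so it gives invertibility of $B$ but not $\det(B)=\det(X)\det(Y)$, which needs multiplicities; rely on the standard block-diagonal determinant factorization for that step instead.
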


Let \( A = [a_{ij}] \in M_{m,n} \) and \( B = [b_{ij}] \in M_{p,q} \). The Kronecker product of \( A \) and \( B \), denoted \( A \otimes B \), is the \(mp \times nq\) matrix defined by
\[
A \otimes B =
\begin{bmatrix}
a_{11}B & a_{12}B & \cdots & a_{1n}B \\
a_{21}B & a_{22}B & \cdots & a_{2n}B \\
\vdots & \vdots & \ddots & \vdots \\
a_{m1}B & a_{m2}B & \cdots & a_{mn}B
\end{bmatrix}.
\]
From Proposition~\ref{prop_Kronecker}, we obtain the following properties of this product.

\begin{proposition}
[\cite{broxson_2006}]\label{prop_Kronecker}
Let \(A,B,C, D \in M_{n}\) and \(\lambda \in \mathbb{R}\). Then:
\begin{enumerate}
\item[(1)] \((\lambda A) \otimes B = \lambda (A \otimes B) = A \otimes (\lambda B)\);
\item[(2)] \((A \otimes B)^{T} = A^{T} \otimes B^{T}\);
\item[(3)] \((A \otimes B) \otimes C = A \otimes (B \otimes C)\);
\item[(4)] \((A + B) \otimes C = A \otimes C + B \otimes C\);
\item[(5)] \(A \otimes (B + C) = A \otimes B + A \otimes C\);
\item[(6)] \(A \otimes \mathbf{0}_{n} = \mathbf{0}_{n} \otimes A = \mathbf{0}_{n}\);
\item[(7)] \(I_{n} \otimes I_{m} = I_{nm}\);
\item[(8)] \( (A\otimes B)(C\otimes D) = (AC) \otimes (BD)\).
\end{enumerate}
\end{proposition}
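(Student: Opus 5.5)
The plan is to verify the eight properties of Proposition~\ref{prop_Kronecker} directly from the block definition of the Kronecker product. For matrices $A=[a_{ij}]$ and $B$, the $(i,j)$ block of $A\otimes B$ is $a_{ij}B$. Two matrices are therefore equal once we show that their $(i,j)$ blocks agree for every pair $(i,j)$. For items (3) and (8) we may need to compare individual entries inside the blocks.

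\textbf{Properties (1)--(7).} These are routine.
\begin{itemize}
\item For (1), the $(i,j)$ block of each side is $\lambda a_{ij}B = a_{ij}(\lambda B)$.
\item For (2), transposing the block matrix swaps block positions and transposes each block. So the $(i,j)$ block of $(A\otimes B)^T$ is $(a_{ji}B)^T = a_{ji}B^T$, which is the $(i,j)$ block of $A^T\otimes B^T$.
\item For (4), the $(i,j)$ blocks are $(a_{ij}+b_{ij})C$ and $a_{ij}C+b_{ij}C$, equal by distributivity of scalar multiplication over matrices.
\item For (5), the blocks are $a_{ij}(B+C)=a_{ij}B+a_{ij}C$.
\item For (6), every block is either $a_{ij}\mathbf{0}$ or $0\cdot A$, hence zero.
\item For (7), the blocks of $I_n\otimes I_m$ are $\delta_{ij}I_m$, which gives $I_{nm}$.
\end{itemize}

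\textbf{Property (3).} Here we index entries. Write row indices of $A\otimes B$ as pairs $(i,k)$, placed at position $(i-1)p+k$, with the analogous convention for columns. Then
\[
(A\otimes B)_{(i,k),(j,l)} = a_{ij}b_{kl}.
\]
Applying this convention twice, both $(A\otimes B)\otimes C$ and $A\otimes(B\otimes C)$ have entry $a_{ij}b_{kl}c_{rs}$ at row triple $(i,k,r)$ and column triple $(j,l,s)$. Both sides also linearize these triples lexicographically in the same way, so the matrices coincide.

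\textbf{Property (8).} This is the only step requiring a computation, and I expect it to be the main, though still mild, obstacle. I will use block multiplication, which is valid because the block sizes are compatible. The $(i,j)$ block of $(A\otimes B)(C\otimes D)$ is
\[
\sum_{k} (a_{ik}B)(c_{kj}D) = \Bigl(\sum_k a_{ik}c_{kj}\Bigr) BD = (AC)_{ij}\,BD.
\]
This is exactly the $(i,j)$ block of $(AC)\otimes(BD)$. The point needing care is that the column partition of $A\otimes B$ must match the row partition of $C\otimes D$. This holds since $B$ and $D$ are both in $M_n$. The argument also goes through for rectangular matrices whenever $AC$ and $BD$ are defined.
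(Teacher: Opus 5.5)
Your proof is correct and complete. The paper gives no proof of this proposition; it simply quotes it from \cite{broxson_2006}, so there is no competing argument to compare against. Your blockwise and entrywise verification is the standard self-contained proof.

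Three small points are worth tightening.

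In (3), the sentence ``both sides linearize the triples in the same way'' carries the whole content of the item, so write the identity out. If $B$ has $p$ rows and $C$ has $q$ rows, the triple $(i,k,r)$ sits at row $\bigl((i-1)p+k-1\bigr)q+r$ of $(A\otimes B)\otimes C$. It sits at row $(i-1)pq+(k-1)q+r$ of $A\otimes(B\otimes C)$. These two numbers agree, and the same holds for columns.

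In (6), your argument shows that the product is the $n^2\times n^2$ zero matrix. The $\mathbf{0}_n$ on the right-hand side of the statement should therefore be read as $\mathbf{0}_{n^2}$.

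Keep your closing remark that nothing depends on all factors lying in $M_n$. The paper later applies these rules with factors of different orders $n_1\neq n_2$ and with vectors, for instance in Theorems~\ref{pk} and~\ref{teorema35}. The stated hypothesis $A,B,C,D\in M_n$ does not literally cover those uses, so the rectangular version you point out is the one the paper actually needs.
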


The literature on $L_\alpha(G)$ is still relatively limited; existing works focus mainly on eigenvalue analysis, bounds, and properties for specific classes (e.g., regular graphs as can be seen in \cite{Wang2020, Fakieh2024, Carla2022}).

\begin{theorem}[\cite{Carla2022}]\label{theo:twins}
Let $G$ be a graph of order at least $2$ and let $v_i$, $v_{j_p}$, $1\le p\le n$, be twin vertices.
\begin{itemize}
\item[(1)] If $v_i$ and $v_{j_p}$ are true twins, then $\alpha(d(v_i)-1)+1$ is an eigenvalue of $L_\alpha(G)$.
\item[(2)] If $v_i$ and $v_{j_p}$ are false twins, then $\alpha d(v_i)$ is an eigenvalue of $L_\alpha(G)$.
\end{itemize}
\end{theorem}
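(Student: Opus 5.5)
Since the statement concerns twin vertices, I will write down an explicit eigenvector supported on a pair of twins. Let $v_i$ and $v_j$ be twins, so in particular $d(v_i)=d(v_j)$. (When they are true twins, $N_G[v_i]=N_G[v_j]$ gives equal degrees; when they are false twins, $N_G(v_i)=N_G(v_j)$ does.) Take $x=e_i-e_j$, the vector with entry $1$ at $v_i$, $-1$ at $v_j$, and $0$ elsewhere. I will compute $L_\alpha(G)x=\alpha D(G)x+(\alpha-1)A(G)x$ coordinate by coordinate and show that it equals a scalar multiple of $x$.

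For any vertex $w\neq v_i,v_j$, the $w$-entry of $A(G)x$ is $a_{wi}-a_{wj}$. In both the true- and false-twin cases, $w$ is adjacent to $v_i$ exactly when it is adjacent to $v_j$, so this entry is $0$. The $D(G)x$ entry at $w$ is also $0$, since $x_w=0$.

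At $v_i$, the entry of $D(G)x$ is $d(v_i)$. The entry of $A(G)x$ is $a_{ii}-a_{ij}=-a_{ij}$. This equals $-1$ for true twins, which are adjacent, and $0$ for false twins. Hence the $v_i$-entry of $L_\alpha(G)x$ is $\alpha d(v_i)+(1-\alpha)$ for true twins and $\alpha d(v_i)$ for false twins. By the same computation, the $v_j$-entry is the negative of this value.

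Therefore $L_\alpha(G)x=\lambda x$ with $\lambda=\alpha d(v_i)+1-\alpha=\alpha(d(v_i)-1)+1$ in case (1) and $\lambda=\alpha d(v_i)$ in case (2). Since $x\neq 0$, $\lambda$ is an eigenvalue of $L_\alpha(G)$. If several vertices $v_{j_p}$ are twins of $v_i$, the same construction applied to each pair $(v_i,v_{j_p})$ produces linearly independent eigenvectors $e_i-e_{j_p}$. This shows the eigenvalue has multiplicity at least the number of such twins, although the statement only asks that it be an eigenvalue. No step is a real obstacle. The only point needing care is the sign bookkeeping of the $a_{ij}$ term, which separates the true-twin case from the false-twin case.
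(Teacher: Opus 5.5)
The paper gives no proof of this statement; it is quoted as a preliminary from \cite{Carla2022}. Your argument is correct and complete. It is the standard twin-vertex proof: $x=e_i-e_j$ is an eigenvector. You verify the two points that need care, namely the equality $d(v_i)=d(v_j)$ in both cases and the value of $a_{ij}$, which separates $\alpha(d(v_i)-1)+1$ from $\alpha d(v_i)$.

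Keep your closing remark that the vectors $e_i-e_{j_p}$ are linearly independent. The theorem as quoted only asserts that these values lie in the spectrum. The paper, however, later invokes it to assign multiplicities: $q-1$ for $\alpha$ in $K_p^q$, $2(n-l-1)$ in $H_n^l$, and $2n-l-3$ in $KK_n^l$. Your observation supplies the needed lower bound, applied to each class of mutual twins.
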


\begin{theorem}
[\cite{Carla2022}]
If $G$ is a $k$-regular graph of order $n$ and $\alpha\in[0,1]$, then for each $1\le i\le n$,
\[
\lambda_i(L_\alpha(G))=\alpha k+(\alpha-1)\lambda_i(A(G)).
\]
\end{theorem}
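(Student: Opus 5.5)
The statement to prove is the regular-graph result: if $G$ is $k$-regular of order $n$ and $\alpha\in[0,1]$, then $\lambda_i(L_\alpha(G))=\alpha k+(\alpha-1)\lambda_i(A(G))$ for every $i$.

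The plan is to use the fact that $D(G)$ becomes a scalar matrix when $G$ is $k$-regular. In that case $D(G)=kI_n$, so
\[
L_\alpha(G)=\alpha k I_n+(\alpha-1)A(G),
\]
and $L_\alpha(G)$ is a polynomial of degree one in $A(G)$. Everything then follows from the elementary spectral mapping for affine functions of a symmetric matrix.

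The first step is an eigenvector argument. Let $x$ be an eigenvector of $A(G)$ with eigenvalue $\mu$. Then
\[
L_\alpha(G)x=\alpha k x+(\alpha-1)\mu x,
\]
so $x$ is an eigenvector of $L_\alpha(G)$ with eigenvalue $\alpha k+(\alpha-1)\mu$.

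The second step is to check that this correspondence preserves multiplicities. Since $A(G)$ is real symmetric, it has an orthonormal eigenbasis $x_1,\dots,x_n$, with $A(G)x_i=\lambda_i(A(G))x_i$. By the first step, the same basis diagonalizes $L_\alpha(G)$, with eigenvalues $\alpha k+(\alpha-1)\lambda_i(A(G))$. So the multiset $\mathrm{Spec}(L_\alpha(G))$ is exactly $\{\alpha k+(\alpha-1)\lambda_i(A(G))\}_{i=1}^n$. Equivalently, $P_{L_\alpha}(x)$ is obtained from $P_A$ by an affine change of variable when $\alpha\neq 1$; the case $\alpha=1$ gives $L_1(G)=kI_n$ directly.

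The one delicate point is the indexing, because the paper orders eigenvalues decreasingly. Since $\alpha-1\le 0$, the map $t\mapsto \alpha k+(\alpha-1)t$ is non-increasing. It therefore reverses the order: the $i$-th largest eigenvalue of $A(G)$ is sent to the $i$-th smallest eigenvalue of $L_\alpha(G)$, not the $i$-th largest. As a check, take $\alpha=0$: then $L_0(G)=-A(G)$, whose largest eigenvalue is $-\lambda_n(A(G))$, not $-\lambda_1(A(G))$. For $\alpha=1$ the map is constant and the index plays no role.

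So I would prove the statement as an equality of spectra, with the index correspondence $\lambda_i(L_\alpha(G))=\alpha k+(\alpha-1)\lambda_{n+1-i}(A(G))$. Equivalently, the stated formula holds when $\lambda_i(A(G))$ is read as the eigenvalue of $A(G)$ attached to the $i$-th vector of the common eigenbasis. The proof would include a remark making this reading explicit.
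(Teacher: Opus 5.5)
Your proof is correct and is the standard argument. The paper gives no proof of its own here (it cites \cite{Carla2022}), but it uses exactly this reasoning in the proof of Theorem~\ref{33}: write $L_\alpha(G)=\alpha kI_n+(\alpha-1)A(G)$ and apply it to an orthogonal eigenbasis of $A(G)$. Your indexing remark is also correct and worth keeping. For $\alpha<1$ the map $t\mapsto \alpha k+(\alpha-1)t$ is decreasing, so under the paper's decreasing-order convention the literal formula fails at $i=1$ for every $k$-regular graph with $k\ge 1$, since $\lambda_n(A(G))<0<k$. The statement is therefore valid only as an equality of spectra, equivalently $\lambda_i(L_\alpha(G))=\alpha k+(\alpha-1)\lambda_{n+1-i}(A(G))$, and that is how the paper actually uses it in Theorems~\ref{complete} and~\ref{33}.
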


\begin{theorem}[\cite{Carla2022}]\label{complete}
For each $\alpha\in[0,1]$, the $L_\alpha$-spectrum of $K_n$ is
\[
\mathrm{Spec}(L_\alpha(K_n))=\{\,2\alpha n-2\alpha-n+1,\ (\alpha n-2\alpha+1)^{(n-1)}\,\}.
\]
\end{theorem}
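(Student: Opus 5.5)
The approach is to write $L_\alpha(K_n)$ as a linear combination of the identity and the all-ones matrix, and then read off the spectrum from that of $\bigjota_{n,n}$. Since $K_n$ is $(n-1)$-regular, $D(K_n)=(n-1)I_n$ and $A(K_n)=\bigjota_{n,n}-I_n$. Substituting into the definition gives
\[
L_\alpha(K_n)=\alpha(n-1)I_n+(\alpha-1)(\bigjota_{n,n}-I_n)=(\alpha n-2\alpha+1)I_n+(\alpha-1)\bigjota_{n,n}.
\]

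Next we determine the spectrum of $\bigjota_{n,n}$. It has rank one and $\bigjota_{n,n}j_n=nj_n$. Every vector orthogonal to $j_n$ lies in its kernel, and this orthogonal complement has dimension $n-1$. Hence
\[
\mathrm{Spec}(\bigjota_{n,n})=\{n,\,0^{(n-1)}\}.
\]

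For a polynomial in a single symmetric matrix, the eigenvectors are shared and the eigenvalues transform accordingly. The eigenvector $j_n$ therefore gives the eigenvalue $(\alpha n-2\alpha+1)+(\alpha-1)n=2\alpha n-2\alpha-n+1$. Each of the $n-1$ independent vectors orthogonal to $j_n$ gives the eigenvalue $\alpha n-2\alpha+1$. This is exactly the spectrum claimed in the statement.

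As a cross-check, one could instead use the general regular-graph result stated just before, $\lambda_i(L_\alpha)=\alpha k+(\alpha-1)\lambda_i(A)$, with $k=n-1$ and $\mathrm{Spec}(A(K_n))=\{n-1,(-1)^{(n-1)}\}$. Alternatively, all vertices of $K_n$ are pairwise true twins, so Theorem~\ref{theo:twins}(1) yields the eigenvalue $\alpha(n-2)+1$ with multiplicity $n-1$. The remaining eigenvalue then follows from the trace, $\mathrm{tr}\,L_\alpha(K_n)=\alpha n(n-1)$.

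There is no real obstacle here. The only points needing care are the multiplicity count, which rests on the dimension of $j_n^\perp$, and the algebraic simplification of the simple eigenvalue.
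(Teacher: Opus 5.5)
Your proof is correct and complete. The paper gives no proof of this statement; it simply quotes it from \cite{Carla2022}, so there is no in-paper argument to compare against.

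Your route is to write $L_\alpha(K_n)=(\alpha n-2\alpha+1)I_n+(\alpha-1)\bigjota_{n,n}$ and use the eigenbasis $\{j_n\}\cup j_n^{\perp}$. This gives every eigenvalue together with its multiplicity in one step. It also stays valid at $\alpha=1$, where the two values coincide at $n-1$.

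One small caveat concerns your second cross-check. Theorem~\ref{theo:twins}, as stated in the paper, only asserts that $\alpha(d(v_i)-1)+1$ is an eigenvalue; it says nothing about multiplicity. To get multiplicity $n-1$ that way, you would need to exhibit the independent eigenvectors $e_1-e_k$ for $2\le k\le n$. Your main argument already supplies this through $j_n^{\perp}$.
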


\begin{theorem}[\cite{Wang2020}]
A graph $G$ is bipartite if and only if $L_\alpha(G)$ and $A_\alpha(G)$ have the same spectrum, where $A_\alpha(G)=\alpha D(G)+(1-\alpha)A(G)$ for $\alpha\in[0,1]$.
\end{theorem}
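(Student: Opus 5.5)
The natural tool for this equivalence is a diagonal $\pm1$ similarity in one direction and a Perron--Frobenius extremal argument in the other. Throughout I read the statement for a fixed $\alpha\in[0,1)$. At $\alpha=1$ both matrices equal $D(G)$, so that case is trivial and carries no information.

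\emph{Bipartite $\Rightarrow$ same spectrum.} Let $V=V_1\cup V_2$ be the bipartition and let $S=\mathrm{diag}(s_v)$ with $s_v=1$ on $V_1$ and $s_v=-1$ on $V_2$. Then $S=S^{-1}$ and $SD(G)S=D(G)$. Every edge joins $V_1$ to $V_2$, so $SA(G)S=-A(G)$. Hence
\[
SA_\alpha(G)S=\alpha D(G)-(1-\alpha)A(G)=L_\alpha(G).
\]
The two matrices are therefore similar and have the same spectrum.

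\emph{Same spectrum $\Rightarrow$ bipartite.} I first prove a lemma for a connected graph $H$: every eigenvalue of $L_\alpha(H)$ is at most $\rho(A_\alpha(H))$, with equality for some eigenvalue iff $H$ is bipartite.

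Since $\alpha<1$ and $H$ is connected, $A_\alpha(H)$ is nonnegative and irreducible. By Perron--Frobenius, its largest eigenvalue is $\rho=\rho(A_\alpha(H))$, it is simple, and it has a positive eigenvector.

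Take a unit eigenvector $x$ of $L_\alpha(H)$ for an eigenvalue $\lambda$, and write $|x|$ for its entrywise absolute value. Then
\[
\lambda=x^TL_\alpha x=\alpha\sum_i d_ix_i^2-2(1-\alpha)\sum_{ij\in E}x_ix_j\le |x|^TA_\alpha|x|\le\rho .
\]
If $\lambda=\rho$, both inequalities are equalities. The second equality makes $|x|$ a Perron vector, so $|x|>0$. The first then forces $x_ix_j<0$ on every edge. Thus the sign of $x$ gives a proper $2$-colouring of $H$, and $H$ is bipartite. The converse direction of the lemma follows from the first part of the proof.

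Now let $G$ have components $G_1,\dots,G_t$. The spectra of $A_\alpha(G)$ and $L_\alpha(G)$ are the multiset unions of the componentwise spectra. I argue by induction on $t$.

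Let $\rho$ be the largest of the values $\rho(A_\alpha(G_k))$. In $A_\alpha(G)$, the multiplicity of $\rho$ equals the number of components attaining $\rho$, because each Perron root is simple. In $L_\alpha(G)$, the lemma shows that no eigenvalue exceeds $\rho$. The same lemma shows that $\rho$ can occur only from a bipartite component $G_k$ with $\rho(A_\alpha(G_k))=\rho$. In such a component, $L_\alpha(G_k)$ is similar to $A_\alpha(G_k)$, so $\rho$ occurs there exactly once.

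Equal multiplicities of $\rho$ in the two spectra therefore force every component attaining $\rho$ to be bipartite. For these components the spectra of $A_\alpha$ and $L_\alpha$ coincide, so they can be deleted from both multisets. The remaining graph still has equal spectra, and induction finishes the proof.

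The main obstacle is exactly this bookkeeping for disconnected graphs. Spectral equality is only an equality of multisets, so without the multiplicity count above one cannot match eigenvalues component by component.
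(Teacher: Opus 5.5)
The paper gives no proof of this statement; it is quoted from \cite{Wang2020}, so there is no in-text argument to compare yours with. Judged on its own, your proof is correct and complete. The signature similarity $SA_\alpha(G)S=L_\alpha(G)$ settles the easy direction. In your lemma the equality analysis is right: $\lambda=\rho$ forces $|x|$ into the one-dimensional Perron eigenspace, hence $|x|>0$, and then $1-\alpha>0$ forces $x_ix_j<0$ on every edge. Your count of the multiplicity of $\rho$ in both spectra, followed by cancelling the (necessarily bipartite) extremal components, correctly handles disconnected graphs. Merely comparing largest eigenvalues would only show that some component is bipartite.

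Two remarks. First, the restriction to $\alpha<1$ is necessary, not merely convenient. At $\alpha=1$ both matrices equal $D(G)$, so the implication ``same spectrum $\Rightarrow$ bipartite'' fails (e.g.\ for $K_3$). The statement as printed therefore holds only for fixed $\alpha\in[0,1)$, or under the reading ``for all $\alpha$''.

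Second, there is a shorter route to the converse that needs neither Perron--Frobenius nor component bookkeeping. Suppose $G$ is not bipartite, let $g$ be the length of a shortest odd cycle, and expand $(\alpha D(G)\pm(1-\alpha)A(G))^g$ into words in $D(G)$ and $A(G)$. Every word has nonnegative trace, since it counts weighted closed walks. Words with an even number of factors $A(G)$ contribute equally to both expansions. A word with an odd number $j<g$ of factors $A(G)$ has trace $0$, because $G$ has no closed walks of odd length less than $g$. The only remaining odd word is $A(G)^g$, hence $\mathrm{tr}\,A_\alpha(G)^g-\mathrm{tr}\,L_\alpha(G)^g=2(1-\alpha)^g\,\mathrm{tr}\,A(G)^g>0$. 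So the spectra differ, uniformly for every graph and every $\alpha\in[0,1)$. What your route buys in exchange is a stronger by-product of independent interest: for connected $H$, $\lambda_1(L_\alpha(H))\le\lambda_1(A_\alpha(H))$, with equality exactly when $H$ is bipartite.
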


\begin{theorem}[\cite{Carla2022, Nikiforov2017}]
\label{car22}
Let $p\ge q\ge 1$ and $\alpha\in[0,1]$. Then:
\begin{enumerate}
\item[(1)] $\displaystyle \lambda_{1}(L_{\alpha}(K_{p,q}))=\frac{1}{2}\Big[\alpha(p+q)+\sqrt{\alpha^{2}(p+q)^{2}+4pq(1-2\alpha)}\Big]$;
\item[(2)] $\displaystyle \lambda_{n}(L_{\alpha}(K_{p,q}))=\frac{1}{2}\Big[\alpha(p+q)-\sqrt{\alpha^{2}(p+q)^{2}+4pq(1-2\alpha)}\Big]$;
\item[(3)] $\lambda_{k}(L_{\alpha}(K_{p,q}))=\alpha p$ for $1<k\le q$;
\item[(4)] $\lambda_{k}(L_{\alpha}(K_{p,q}))=\alpha q$ for $q<k\le p+q$.
\end{enumerate}
\end{theorem}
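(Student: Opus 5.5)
The plan is to compute the whole $L_\alpha$-spectrum of $K_{p,q}$ directly from its block structure, using the equitable partition into the two colour classes. Order the vertices so that the $p$ vertices of $V_1$ come first. Each vertex of $V_1$ has degree $q$ and each vertex of $V_2$ has degree $p$. Hence
\[
L_\alpha(K_{p,q})=\begin{bmatrix}\alpha q\, I_p & (\alpha-1)\bigjota_{p,q}\\ (\alpha-1)\bigjota_{q,p} & \alpha p\, I_q\end{bmatrix}.
\]
Every block has constant row sums, so Theorem~\ref{equitable} applies. It gives the quotient matrix
\[
M=\begin{bmatrix}\alpha q & (\alpha-1)q\\ (\alpha-1)p & \alpha p\end{bmatrix},
\]
with $\mathrm{tr}\,M=\alpha(p+q)$ and $\det M=\alpha^2pq-(\alpha-1)^2pq=pq(2\alpha-1)$. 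The two roots of $x^2-\alpha(p+q)x+pq(2\alpha-1)$ are exactly the expressions in (1) and (2), since the discriminant is $\alpha^2(p+q)^2+4pq(1-2\alpha)$. By Theorem~\ref{quociente}(i), both are eigenvalues of $L_\alpha(K_{p,q})$.

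Next I produce the remaining $n-2$ eigenvalues explicitly. Let $x\in\mathbb{R}^p$ with $j_p^Tx=0$ and consider $(x,0)$. Then $\bigjota_{q,p}x=0$, so $L_\alpha(x,0)^T=\alpha q\,(x,0)^T$. This gives the eigenvalue $\alpha q$ with a $(p-1)$-dimensional eigenspace. Symmetrically, vectors $(0,y)$ with $j_q^Ty=0$ give the eigenvalue $\alpha p$ with multiplicity $q-1$. These eigenvectors are orthogonal to the two quotient eigenvectors, which are constant on each part. So together we get a full eigenbasis of $n=p+q$ vectors and the spectrum is complete. Alternatively, one could note that $L_\alpha$ and $A_\alpha$ are cospectral for bipartite graphs (Wang's theorem) and quote Nikiforov's computation for $A_\alpha(K_{p,q})$. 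I would use the direct argument and mention the citation as a check.

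The step I expect to need care is the ordering, i.e.\ that the two quotient eigenvalues really are $\lambda_1$ and $\lambda_n$ and that the $\alpha p$ values precede the $\alpha q$ values. Since $p\ge q$, we have $\alpha p\ge\alpha q$. It remains to show
\[
\tfrac12\bigl[\alpha(p+q)+\sqrt{\Delta}\bigr]\ge \alpha p
\quad\text{and}\quad
\tfrac12\bigl[\alpha(p+q)-\sqrt{\Delta}\bigr]\le \alpha q .
\]
Both inequalities are equivalent to $\sqrt{\Delta}\ge\alpha(p-q)$. This follows from
\[
\Delta-\alpha^2(p-q)^2=4\alpha^2pq+4pq(1-2\alpha)=4pq(1-\alpha)^2\ge 0.
\]
Listing the eigenvalues in nonincreasing order then yields the indices in (1)–(4). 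The multiplicity $q-1$ of $\alpha p$ occupies positions $2,\dots,q$, and $\alpha q$ fills the positions after that up to $n-1$. Read that way, the index range stated in (4) should be interpreted as ending before $n$.
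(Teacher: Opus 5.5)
Your proof is correct. The paper does not prove this statement itself: it quotes it from \cite{Carla2022, Nikiforov2017}. The natural reading of that citation is your alternative, namely Nikiforov's computation of the $A_\alpha(K_{p,q})$-spectrum combined with Wang's bipartite cospectrality theorem, which is stated immediately before.

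Your direct route is self-contained and follows the template the paper later uses for $K_p^q$, $H_n^l$ and $KK_n^l$: a quotient matrix plus explicit eigenvectors orthogonal to the block-constant vectors. It also gives you two things the bare citation hides.

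\begin{itemize}
\item \emph{Multiplicities.} Theorem~\ref{quociente} only gives $\mathrm{Spec}(M)\subseteq\mathrm{Spec}(L_\alpha)$ as sets. Your count of $2+(q-1)+(p-1)=n$ independent eigenvectors is what actually pins down the multiplicities. The two lifts from $M$ are independent in every case, including the degenerate case $\alpha=1$, $p=q$, where $M=pI_2$ is scalar.
\item \emph{Ordering.} Your identity $\Delta-\alpha^2(p-q)^2=4pq(1-\alpha)^2$ settles the ordering of the eigenvalues.
\end{itemize}

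Your closing remark is right and should be stated firmly, not hedged. As written, item (4) with $k=p+q$ asserts $\lambda_n=\alpha q$. By your identity this contradicts (2) for every $\alpha<1$. The range must be $q<k<p+q$. This is also how the paper's own corollary for the star $K_{1,n-1}$ reads, with $\lambda_k=\alpha$ for $1<k<n$.
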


As consequence of Theorem \ref{car22} has the following result.

\begin{corollary} [\cite{Fakieh2024}]
The $L_\alpha$-spectrum of the star $K_{1,n-1}$ is:
\begin{align*}
\lambda({L_\alpha}(K_{1,n-1})) &= \tfrac{1}{2}\Big(\alpha n+\sqrt{\alpha^{2}n^{2}+4(n-1)(1-2\alpha)} \Big), \\
\lambda({L_\alpha}(K_{1,n-1})) &= \tfrac{1}{2}\Big(\alpha n-\sqrt{\alpha^{2}n^{2}+4(n-1)(1-2\alpha)} \Big), \\
\lambda_{k}({L_\alpha}(K_{1,n-1})) &= \alpha \quad \text{for } 1 < k < n.
\end{align*}
\end{corollary}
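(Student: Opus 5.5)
This follows directly from Theorem~\ref{car22} by taking $p=n-1$ and $q=1$, so that $p\ge q\ge 1$ holds whenever $n\ge 2$. Then $K_{p,q}=K_{n-1,1}\simeq K_{1,n-1}$, which has order $p+q=n$. The $L_\alpha$-spectrum does not depend on the order of the two parts, because relabeling vertices conjugates $L_\alpha$ by a permutation matrix.

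The proof is a substitution into each item of Theorem~\ref{car22}.

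First, $p+q=n$ and $pq=n-1$. Putting these into items (1) and (2) gives
\[
\tfrac{1}{2}\Bigl(\alpha n\pm\sqrt{\alpha^{2}n^{2}+4(n-1)(1-2\alpha)}\Bigr),
\]
which are the first two stated eigenvalues.

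Second, item (3) covers the indices $1<k\le q=1$. This range is empty, so it contributes nothing; in particular the value $\alpha p=\alpha(n-1)$ does not appear.

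Third, item (4) gives $\lambda_k=\alpha q=\alpha$ for $1=q<k\le p+q=n$. This range includes $k=n$, where item (2) already assigns the smallest eigenvalue. So the indices must be reconciled, and this is the only point needing care.

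To settle the count, I would check multiplicities directly rather than rely on the index ranges in Theorem~\ref{car22}. The $n-1$ leaves are pairwise false twins, all of degree $1$. Let $u,w$ be two leaves. The vector $e_u-e_w$ is killed by the off-diagonal part, since $u$ and $w$ have the same neighbours. It is therefore an eigenvector of $L_\alpha$ with eigenvalue $\alpha d(u)=\alpha$, as in Theorem~\ref{theo:twins}(2). The vectors $e_{u}-e_{w}$, with $u$ a fixed leaf and $w$ ranging over the other $n-2$ leaves, are linearly independent. Hence $\alpha$ has multiplicity at least $n-2$.

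The remaining two eigenvalues come from the equitable partition \{center\}, \{leaves\}. Its quotient matrix is
\[
\begin{bmatrix}\alpha(n-1) & (\alpha-1)(n-1)\\ \alpha-1 & \alpha\end{bmatrix}.
\]
By Theorem~\ref{equitable}, its eigenvalues are eigenvalues of $L_\alpha(K_{1,n-1})$. Its trace is $\alpha n$ and its determinant is $(n-1)(2\alpha-1)$. The characteristic polynomial is therefore $x^2-\alpha n x+(n-1)(2\alpha-1)$, whose roots are exactly the two displayed values.

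The eigenvectors of the quotient lift to vectors that are constant on the leaves. These are orthogonal to the $n-2$ twin vectors, which sum to zero on the leaves. The quotient's two eigenvectors are linearly independent, since the quotient is similar to a symmetric matrix via $\mathrm{diag}(1,\sqrt{n-1})$. Together this gives $n$ linearly independent eigenvectors, so the spectrum is complete. The eigenvalue $\alpha$ has multiplicity $n-2$, corresponding to the indices $1<k<n$, which is exactly the statement.
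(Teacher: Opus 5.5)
Your proposal is correct. The paper gives no argument beyond calling the corollary a consequence of Theorem~\ref{car22}, which is exactly your substitution $p=n-1$, $q=1$. What you add is a self-contained check that is worth having.

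You rightly observe that item (4) of Theorem~\ref{car22}, as printed, assigns $\alpha q$ to the index $k=p+q$, which collides with item (2); the range there should be $q<k<p+q$. Literal substitution therefore does not produce the stated index set.

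Your direct count avoids relying on those index ranges. The $n-2$ independent false-twin vectors $e_u-e_w$ give the eigenvalue $\alpha$. The two lifted eigenvectors of the $2\times 2$ quotient are orthogonal to the twin vectors and independent, by the symmetrization with $\mathrm{diag}(1,\sqrt{n-1})$. Together they recover the full multiset of $n$ eigenvalues.

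The one thing the count does not supply by itself is the ordering: that the root with $+$ is $\lambda_1$ and the root with $-$ is $\lambda_n$. You can take this from items (1)--(2) of Theorem~\ref{car22}. Alternatively, you can make the argument fully independent by noting that $f(x)=x^2-\alpha n x+(n-1)(2\alpha-1)$ satisfies $f(\alpha)=-(n-1)(1-\alpha)^2\le 0$, so $\alpha$ lies between the two roots.
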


\begin{theorem}[\cite{Wang2020}]
If $G$ is a connected graph of order $n$ and $\alpha \ge \tfrac{1}{2}$, then all eigenvalues of $L_\alpha(G)$ are nonnegative.
\end{theorem}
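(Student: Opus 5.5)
This statement is a cited result (Wang et al.): for a connected graph $G$ and $\alpha\ge \tfrac12$, every eigenvalue of $L_\alpha(G)$ is nonnegative. The plan is to show that $L_\alpha(G)$ is positive semidefinite by writing it as a sum of a Laplacian-type part and a nonnegative diagonal part. Since $L_\alpha(G)$ is real symmetric, its eigenvalues are real, so positive semidefiniteness is exactly what is needed.

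Take any $x\in\mathbb{R}^n$. Using $x^TD(G)x=\sum_{\{u,v\}\in E}(x_u^2+x_v^2)$ and $x^TA(G)x=2\sum_{\{u,v\}\in E}x_ux_v$, we get
\[
x^TL_\alpha(G)x=\sum_{\{u,v\}\in E}\Bigl(\alpha x_u^2+\alpha x_v^2+2(\alpha-1)x_ux_v\Bigr).
\]
It then suffices to show that each edge term is nonnegative. To do this, split each term as
\[
\alpha x_u^2+\alpha x_v^2-2(1-\alpha)x_ux_v=(1-\alpha)(x_u-x_v)^2+(2\alpha-1)(x_u^2+x_v^2).
\]
Expanding the right-hand side confirms the identity. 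For $\alpha\in[\tfrac12,1]$ both coefficients $1-\alpha$ and $2\alpha-1$ are nonnegative, so each edge contribution is nonnegative.

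In matrix form, this says
\[
L_\alpha(G)=(1-\alpha)L(G)+(2\alpha-1)D(G).
\]
Here $L(G)$ is positive semidefinite, as recalled in the introduction, and $D(G)$ is a nonnegative diagonal matrix. A nonnegative combination of positive semidefinite matrices is positive semidefinite, so $\lambda_n(L_\alpha(G))\ge 0$. Connectedness is not actually used; it only guarantees that $D(G)\neq 0$ when $n\ge 2$.

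There is no real obstacle in this argument. The only point that needs care is finding the right decomposition of $L_\alpha(G)$. One might first try $\alpha L(G)+(2\alpha-1)A(G)$, but that fails because $A(G)$ is indefinite. The split into $L(G)$ and $D(G)$ works because both pieces are positive semidefinite.
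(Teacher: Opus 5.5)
Your proof is correct and complete. The identity $L_\alpha(G)=(1-\alpha)L(G)+(2\alpha-1)D(G)$ holds. For $\alpha\in[\tfrac12,1]$ it writes $L_\alpha(G)$ as a nonnegative combination of two positive semidefinite matrices, which is exactly what is needed since $L_\alpha(G)$ is symmetric.

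The paper gives no proof of this statement; it is quoted from \cite{Wang2020} as a preliminary. So there is no argument in the paper to compare yours against. Your decomposition, or equivalently the edge-by-edge bound on the quadratic form, is the standard self-contained argument.

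A few small remarks:
\begin{itemize}
\item You are right that connectedness plays no role.
\item Your restriction to $\alpha\le 1$ matches the paper's convention $\alpha\in[0,1]$. If one wanted $\alpha>1$ as well, the analogous split $L_\alpha(G)=(\alpha-1)Q(G)+D(G)$ works, since $x^TQ(G)x=\sum_{\{u,v\}\in E}(x_u+x_v)^2\ge 0$.
\item The threshold $\tfrac12$ is sharp. By Theorem~\ref{complete}, $L_\alpha(K_n)$ has the eigenvalue $2\alpha n-2\alpha-n+1=(2\alpha-1)(n-1)$, which is negative for every $n\ge 2$ when $\alpha<\tfrac12$.
\end{itemize}
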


\section{Main results}\label{sec3}

This section presents results of the spectrum for the family of matrices $L_\alpha$ on graphs obtained via union, join, graph products, splitting, and coalescence of vertices. The spectra of the Pineapple, $H_n^l$, $KK_n^l$, and Core–Satellite graph are determined.

%\subsection{Union of graphs}

\subsection{Results for graph operations}

%Label the vertices of $G \cup H$ so that those of $G$ come first and those of $H$ come last. Then

Throughout this subsection, we study the spectrum of $L_{\alpha}$-matrices of graphs obtained from operations on $G$ and $H$, where $G$ and $H$ are graphs of order $n_1$ and $n_2$, respectively. To obtain the $L_{\alpha}-$matrices, the vertices are labeled so that those of $G$ come first, followed by those of $H$.

For the union $G \cup H$, we have
\begin{align*}
A(G \cup H) &= \begin{bmatrix} 
A(G) & 0 \\ 
0 & A(H) 
\end{bmatrix}, \qquad
D(G \cup H) = \begin{bmatrix} 
D(G) & 0 \\ 
0 & D(H) 
\end{bmatrix}.
\end{align*}
Hence, for $\alpha \in [0,1]$,
\begin{equation}\label{matriz_L_alpha_uniao}
L_\alpha(G \cup H) =
\begin{bmatrix} 
L_\alpha(G) & 0 \\ 
0 & L_\alpha(H) 
\end{bmatrix}.
\end{equation}

\begin{theorem}\label{theo:spec_union}
Let $\alpha \in [0,1]$. Then
\[
\mathrm{Spec}(L_\alpha(G \cup H)) 
= \mathrm{Spec}(L_\alpha(G)) \cup \mathrm{Spec}(L_\alpha(H)).
\]
\begin{proof}
With the vertex ordering described above, the matrix $L_\alpha(G \cup H)$ has the block diagonal form given in~\eqref{matriz_L_alpha_uniao}. From Theorem~\ref{theo:union}, the result follows immediately from the spectral properties of block diagonal matrices.
\end{proof}
\end{theorem}

For the operation join between two graphs, $G \lor H$, we have
\begin{align*}
\resizebox{\textwidth}{!}{$
A(G \lor H) = \begin{bmatrix} 
A(G) & J_{n_1, n_2} \\ 
J_{n_2, n_1} & A(H) 
\end{bmatrix}, \qquad
D(G \lor H) = \begin{bmatrix} 
D(G) + n_2 I_{n_1} & 0 \\ 
0 & D(H) + n_1 I_{n_2}
\end{bmatrix}.
$}
\end{align*}
Consequently,
\begin{equation}\label{eq:theo_union} 
L_\alpha(G \lor H) =
\begin{bmatrix}
L_\alpha(G) + \alpha n_2 I_{n_1} & (\alpha-1) J_{n_1, n_2} \\
(\alpha-1) J_{n_2, n_1} & L_\alpha(H) + \alpha n_1 I_{n_2}
\end{bmatrix},
\end{equation}
for $\alpha \in [0,1].$
Let $j_{n_1} \in \mathbb{R}^{n_1}$ and $j_{n_2} \in \mathbb{R}^{n_2}$ denote the all-ones vectors. The next results involve the  $L_\alpha(G \lor H)-$spectrum.

\begin{theorem}\label{theo:joinvet}
Let $x \in \mathbb{R}^{n_1}$ and $y \in \mathbb{R}^{n_2}$ be eigenvectors of $L_{\alpha}(G)$ e $L_{\alpha}(H),$ respectively, associated with the eigenvalues $\lambda$ and $\mu$,  such that $x$ and $y$ are orthogonal to the vectors $j_{n_1}$ and $j_{n_2},$ respectively. Then
\[
\{\alpha n_2 + \lambda,\; \alpha n_1 + \mu\} \subset \mathrm{Spec}(L_\alpha(G \lor H)),
\]
for $\alpha \in [0,1].$
\end{theorem}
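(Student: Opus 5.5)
The plan is to build eigenvectors of $L_\alpha(G \lor H)$ directly from $x$ and $y$, using the block form~\eqref{eq:theo_union}. For the first eigenvalue, consider the vector $\hat{x} = \begin{bmatrix} x \\ 0 \end{bmatrix} \in \mathbb{R}^{n_1+n_2}$, where $0$ denotes the zero vector of $\mathbb{R}^{n_2}$. This vector is nonzero because $x$ is an eigenvector.

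Multiplying by the block matrix gives the top block
\[
(L_\alpha(G) + \alpha n_2 I_{n_1})x + (\alpha-1)J_{n_1,n_2}\,0 = \lambda x + \alpha n_2 x = (\alpha n_2 + \lambda)x .
\]
The bottom block is $(\alpha-1)J_{n_2,n_1}x + (L_\alpha(H)+\alpha n_1 I_{n_2})0 = (\alpha-1)J_{n_2,n_1}x$.

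This is the only point where the orthogonality hypothesis enters. Every row of $J_{n_2,n_1}$ equals $j_{n_1}^T$, so $J_{n_2,n_1}x = (j_{n_1}^T x)\, j_{n_2} = 0$ because $x \perp j_{n_1}$. Hence
\[
L_\alpha(G\lor H)\hat{x} = (\alpha n_2+\lambda)\hat{x},
\]
and so $\alpha n_2 + \lambda \in \mathrm{Spec}(L_\alpha(G\lor H))$.

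The argument for $\mu$ is symmetric. Take $\hat{y} = \begin{bmatrix} 0 \\ y\end{bmatrix}$ with $0 \in \mathbb{R}^{n_1}$. The top block is $(\alpha-1)J_{n_1,n_2}y = (\alpha-1)(j_{n_2}^Ty)\,j_{n_1} = 0$, using $y\perp j_{n_2}$. The bottom block is $(L_\alpha(H)+\alpha n_1 I_{n_2})y = (\alpha n_1+\mu)y$. Therefore $\alpha n_1+\mu$ is also an eigenvalue, for every $\alpha\in[0,1]$, which proves the statement.

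There is no real obstacle in this proof. The one step that needs care is the identity $J_{m,k}z = (j_k^T z)\,j_m$, which makes the off-diagonal blocks vanish on vectors orthogonal to the all-ones vector. Everything else follows directly from the block form~\eqref{eq:theo_union}.
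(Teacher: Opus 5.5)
Your proposal is correct and follows the paper's proof essentially verbatim: the paper also pads $x$ and $y$ with zeros to get $X$ and $Y$, and uses $J_{n_2,n_1}x = 0$ and $J_{n_1,n_2}y = 0$ in the block form~\eqref{eq:theo_union} to read off the eigenvalues $\alpha n_2+\lambda$ and $\alpha n_1+\mu$. Your explicit identity $J_{m,k}z=(j_k^Tz)\,j_m$ and your remark that the padded vectors are nonzero are small clarifications of steps the paper leaves implicit.
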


\begin{proof}
Let $G$ and $H$ graphs of orders $n_1$ and $n_2$, respectively, and define
\[
X^\top = \begin{bmatrix} x & 0 \end{bmatrix}, \qquad
Y^\top = \begin{bmatrix} 0 & y \end{bmatrix}.
\]
From~\eqref{eq:theo_union} and the relations $J_{n_2, n_1}x = {\bf{0}}_{n_2,1}$ and $J_{n_1, n_2}y = {\bf{0}}_{n_1,1}$, it follows that
\[
L_\alpha(G \lor H)X = (\alpha n_2 + \lambda)X, \qquad
L_\alpha(G \lor H)Y = (\alpha n_1 + \mu)Y,
\]
which proves the result.
\end{proof}

\begin{theorem}\label{33}
Let $G$ be a $k$-regular graph of order $n_1$, and let $H$ be an $r$-regular graph of order $n_2$. For $\alpha \in [0,1]$,
\begin{equation*}
\resizebox{\textwidth}{!}{$
\mathrm{Spec}(L_\alpha(G \lor H)) =
\left\{
\lambda_1(M), \lambda_2(M),\;
\alpha (k+n_2) + (\alpha-1)\lambda_i(A(G)),\;
\alpha (r+n_1) + (\alpha-1)\lambda_j(A(H))
\right\},
$}
\end{equation*}
for $2 \le i \le n_1$ and $2 \le j \le n_2$, where
\[
M =
\begin{bmatrix}
(2k + n_2)\alpha - k & (\alpha-1)n_2 \\
(\alpha-1)n_1 & (2r + n_1)\alpha - r
\end{bmatrix}.
\]
\end{theorem}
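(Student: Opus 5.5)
The plan is to exhibit $n_1+n_2$ eigenvalues explicitly and account for multiplicities. There are three pieces: the $n_1-1$ eigenvalues coming from $G$, the $n_2-1$ coming from $H$, and the $2$ eigenvalues of $M$.

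\textbf{Step 1 (regular-graph eigenvectors).} Since $G$ is $k$-regular, $D(G)=kI_{n_1}$ and $L_\alpha(G)=\alpha k I_{n_1}+(\alpha-1)A(G)$. As $A(G)$ is symmetric with $A(G)j_{n_1}=kj_{n_1}$, we can choose an orthogonal eigenbasis $j_{n_1},x_2,\dots,x_{n_1}$ of $A(G)$, where $A(G)x_i=\lambda_i(A(G))x_i$ and $x_i\perp j_{n_1}$ for $2\le i\le n_1$. Each $x_i$ is then an eigenvector of $L_\alpha(G)$ with eigenvalue $\alpha k+(\alpha-1)\lambda_i(A(G))$, and it is orthogonal to $j_{n_1}$. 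The same holds for $H$, with vectors $y_j$ and eigenvalues $\alpha r+(\alpha-1)\lambda_j(A(H))$.

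\textbf{Step 2 (apply Theorem~\ref{theo:joinvet}).} The vectors $[x_i;0]$ are eigenvectors of $L_\alpha(G\lor H)$ with eigenvalues $\alpha(k+n_2)+(\alpha-1)\lambda_i(A(G))$. Likewise, the vectors $[0;y_j]$ give $\alpha(r+n_1)+(\alpha-1)\lambda_j(A(H))$. Together these are $n_1+n_2-2$ mutually orthogonal eigenvectors, all lying in the orthogonal complement of $W=\mathrm{span}\{[j_{n_1};0],[0;j_{n_2}]\}$.

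\textbf{Step 3 (the quotient).} Partition $L_\alpha(G\lor H)$ as in \eqref{eq:theo_union}. The row sums of the blocks are constant:
\begin{itemize}
\item upper-left: $\alpha k+\alpha n_2+(\alpha-1)k=(2k+n_2)\alpha-k$;
\item upper-right: $(\alpha-1)n_2$;
\item lower-left: $(\alpha-1)n_1$;
\item lower-right: $(2r+n_1)\alpha-r$.
\end{itemize}
So the quotient matrix is exactly $M$, and by Theorem~\ref{equitable} its eigenvalues are eigenvalues of $L_\alpha(G\lor H)$.

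\textbf{Step 4 (multiplicities).} The main point needing care is that the union is a multiset union: the eigenvalues of $M$ must be counted with multiplicity, including when they coincide with eigenvalues from Step 2. Theorem~\ref{equitable} as stated does not give this directly, so I argue concretely. The subspace $W$ is invariant under $L_\alpha(G\lor H)$, because the block row sums are constant. In the basis $[j_{n_1};0],[0;j_{n_2}]$ the restriction to $W$ is represented by $M$. The symmetric matrix $L_\alpha(G\lor H)$ also leaves $W^\perp$ invariant, and Step 2 supplies an orthogonal eigenbasis of $W^\perp$, which has dimension $n_1+n_2-2$. Hence the characteristic polynomial factors as $P_M(x)$ times the product of the linear factors from Step 2. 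This gives the full spectrum with multiplicities and completes the proof.
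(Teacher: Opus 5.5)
Your proof is correct and follows the paper's route. Like the paper, you use the quotient matrix $M$ of the partition $\{V(G),V(H)\}$ together with the lifted eigenvectors $[x_i;0]$ and $[0;y_j]$ orthogonal to the all-ones vectors; both arguments tacitly use $\lambda_1(A(G))=k$ to justify the indexing $2\le i\le n_1$. Your Step~4, which splits $\mathbb{R}^{n_1+n_2}=W\oplus W^{\perp}$ into invariant subspaces with $M$ representing the restriction to $W$, supplies the multiplicity accounting that the paper omits when it simply says ``the result follows.''
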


\begin{proof}
Let $G$ be a $k$-regular graph of order $n_1$, and let $H$ be an $r$-regular graph of order $n_2$. Applying Theorem~\ref{equitable} to the matrix in~\eqref{eq:theo_union}, we obtain the quotient matrix $M$. From Theorem~\ref{quociente}, the eigenvalues of $M$ belong to $\mathrm{Spec}(L_\alpha(G \lor H))$. 

Let $\{j_{n_1}, x_2, \ldots, x_{n_1} \}$ be an orthogonal basis of eigenvectors of $A(G)$ such that $A(G)j_{n_1} = kj_{n_1}$ and $A(G)x_i = \lambda_i(A(G))x_i$, for all $2\leq i \leq n_1$. Define the vectors $\overline{x}_i = \begin{bmatrix} x_i \\ 0 \end{bmatrix}$, where the first $n_1$ entries of $\overline{x}_i$ coincide with those of $x_i$, and the remaining entries are zero, for all $2\leq i \leq n_1$. Then,
\begin{align*}
L_\alpha(G \lor H) \overline{x}_i&= \begin{bmatrix}
\alpha n_2 I_{n_1} + L_\alpha(G) & (\alpha-1) J_{n_1 \times n_2} \\
(\alpha-1) J_{n_2 \times n_1} & \alpha n_1 I_{n_2} + L_\alpha(H)
\end{bmatrix}
\begin{bmatrix}
x_i \\
0
\end{bmatrix} \\
&= \begin{bmatrix}
(\alpha (k + n_2) I_{n_1} + (\alpha-1)A(G))x_i \\
0
\end{bmatrix} \\
&= \begin{bmatrix}
\alpha (k + n_2) x_i + (\alpha-1)\lambda_i(A(G))x_i \\
0
\end{bmatrix} \\
&= \left(\alpha (k + n_2) + (\alpha-1)\lambda_i(A(G))\right)\overline{x}_i.
\end{align*}

Hence, $\alpha (k + n_2) + (\alpha-1)\lambda_i(A(G))$ is an eigenvalue of $L_\alpha(G \lor H)$, for $2\leq i \leq n_1$.

Applying the same argument to the graph $H$, which is $r$-regular, we obtain that $\alpha (r + n_1) + (\alpha-1)\lambda_j(A(H))$ is an eigenvalue of $L_\alpha(G \lor H)$, for $2\leq j \leq n_2$, and the result follows.

\end{proof}

For graphs $G$ and $H$ of orders $n_1$ and $n_2$, respectively, we have
\[
A(G \times H) = A(G) \otimes I_{n_2} + I_{n_1} \otimes A(H), \qquad
D(G \times H) = D(G) \otimes I_{n_2} + I_{n_1} \otimes D(H).
\]
Consequently,
\begin{equation}\label{eq:cartesian}
L_\alpha(G \times H) = L_\alpha(G) \otimes I_{n_2} + I_{n_1} \otimes L_\alpha(H).
\end{equation}

Theorem~\ref{pk} presents the $L_\alpha(G \times H)-$spectrum.

\begin{theorem}\label{pk}
Let $G$ and $H$ be graphs of orders $n_1$ and $n_2$, respectively, and let $\alpha \in [0,1]$. Then
\[
\mathrm{Spec}(L_\alpha(G \times H)) =
\left\{
\lambda_i(L_\alpha(G)) + \lambda_j(L_\alpha(H))
\right\},
\]
for $1 \le i \le n_1$ and $ 1 \le j \le n_2.$
\end{theorem}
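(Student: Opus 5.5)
The plan is to start from the identity~\eqref{eq:cartesian}, $L_\alpha(G \times H) = L_\alpha(G) \otimes I_{n_2} + I_{n_1} \otimes L_\alpha(H)$, and build an explicit eigenbasis using the mixed-product rule of Proposition~\ref{prop_Kronecker}(8). Since $L_\alpha(G)$ and $L_\alpha(H)$ are real symmetric, I will fix orthonormal eigenbases $\{x_1,\dots,x_{n_1}\}$ of $L_\alpha(G)$ and $\{y_1,\dots,y_{n_2}\}$ of $L_\alpha(H)$. These satisfy $L_\alpha(G)x_i=\lambda_i(L_\alpha(G))x_i$ and $L_\alpha(H)y_j=\lambda_j(L_\alpha(H))y_j$.

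First, I check that each $x_i\otimes y_j$ is an eigenvector. The mixed-product property gives
\[
(L_\alpha(G)\otimes I_{n_2})(x_i\otimes y_j)=(L_\alpha(G)x_i)\otimes y_j=\lambda_i(L_\alpha(G))\,(x_i\otimes y_j),
\]
and similarly $(I_{n_1}\otimes L_\alpha(H))(x_i\otimes y_j)=\lambda_j(L_\alpha(H))\,(x_i\otimes y_j)$. Here vectors are treated as $n\times 1$ matrices; Proposition~\ref{prop_Kronecker}(8) is stated for square matrices, but it holds for any compatible sizes. Adding the two relations shows that $x_i\otimes y_j$ is an eigenvector of $L_\alpha(G\times H)$ with eigenvalue $\lambda_i(L_\alpha(G))+\lambda_j(L_\alpha(H))$.

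Second, I need the multiset of these $n_1n_2$ values to be exactly the spectrum, counted with multiplicity. By the mixed-product and transpose rules, $(x_i\otimes y_j)^T(x_k\otimes y_l)=(x_i^Tx_k)(y_j^Ty_l)=\delta_{ik}\delta_{jl}$. So the $n_1n_2$ vectors $x_i\otimes y_j$ are orthonormal, hence a basis of $\mathbb{R}^{n_1n_2}$. In that eigenbasis $L_\alpha(G\times H)$ is diagonal with diagonal entries $\lambda_i(L_\alpha(G))+\lambda_j(L_\alpha(H))$, which gives the claimed spectrum with multiplicities.

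The only step needing care is this completeness and multiplicity count, and the orthonormality argument settles it. If one prefers to avoid bases, an alternative is to note that $L_\alpha(G)\otimes I_{n_2}$ and $I_{n_1}\otimes L_\alpha(H)$ commute and are simultaneously diagonalized by $P\otimes Q$, where $P,Q$ are orthogonal diagonalizing matrices of $L_\alpha(G)$ and $L_\alpha(H)$.
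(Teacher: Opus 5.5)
Your proposal is correct and is essentially the paper's argument: the paper conjugates $L_\alpha(G\times H)$ by the orthogonal matrix $X\otimes Y$, whose columns are exactly your vectors $x_i\otimes y_j$, to obtain $\mathrm{diag}(\lambda_i(L_\alpha(G))+\lambda_j(L_\alpha(H)))$. Your orthonormality check and your remark on applying the mixed-product rule to non-square factors spell out the details behind that diagonalization, including the multiplicity count.
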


\begin{proof}
Since $L_\alpha(G)$ and $L_\alpha(H)$ are symmetric, there exist orthogonal bases of $\mathbb{R}^{n_1}$ and $\mathbb{R}^{n_2}$ consisting of their eigenvectors. Let $X$ and $Y$ be orthogonal matrices whose columns are these eigenvectors, respectively. So
\[
X^T L_\alpha(G) X = \mathrm{diag}(\lambda_1(L_\alpha(G)),\dots,\lambda_{n_1}(L_\alpha(G))), \quad
Y^T L_\alpha(H) Y = \mathrm{diag}(\lambda_1(L_\alpha(H)),\dots,\lambda_{n_2}(L_\alpha(H))).
\]
Using~\eqref{eq:cartesian} and properties of the Kronecker product, we obtain
\[
(X \otimes Y)^T L_\alpha(G \times H) (X \otimes Y)
=
\mathrm{diag}(\lambda_i(L_\alpha(G)) + \lambda_j(L_\alpha(H))),
\]
for all $1 \le i \le n_1$ and $1 \le j \le n_2$, which proves the result.
\end{proof}

%\subsection{Direct product}
For the direct product, we have
\[
A(G \odot H) = A(G) \otimes A(H), \qquad
D(G \odot H) = D(G) \otimes D(H),
\]
and therefore
\begin{equation}\label{eq:direct}
L_\alpha(G \odot H)
=
\alpha \big(D(G) \otimes D(H)\big)
+
(\alpha - 1)\big(A(G) \otimes A(H)\big).
\end{equation}

The next theorems involve the $L_\alpha(G \odot  H)-$spectrum.

\begin{theorem}\label{teorema35}
Let $H$ be an $r$-regular graph of order $n_2$, and let $G$ be a connected graph of order $n_1$. Then
\[
\bigcup_{i=1}^{n_1} \{ r\,\lambda_i(L_\alpha(G)) \}
\subseteq
\mathrm{Spec}(L_\alpha(G \odot H)).
\]
for $\alpha \in [0,1].$
\end{theorem}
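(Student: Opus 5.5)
The plan is to build explicit eigenvectors of $L_\alpha(G\odot H)$ from those of $L_\alpha(G)$ by tensoring with the all-ones vector $j_{n_2}$. The $r$-regularity of $H$ is what makes $j_{n_2}$ a common eigenvector of both $D(H)$ and $A(H)$ with the same eigenvalue $r$. Indeed $D(H)=rI_{n_2}$, so $D(H)j_{n_2}=rj_{n_2}$ and $A(H)j_{n_2}=rj_{n_2}$. This lets the two Kronecker terms in~\eqref{eq:direct} collapse into a single factor $r\,L_\alpha(G)$.

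Concretely, since $L_\alpha(G)$ is symmetric, I will fix an orthogonal basis $\{x_1,\dots,x_{n_1}\}$ of $\mathbb{R}^{n_1}$ with $L_\alpha(G)x_i=\lambda_i(L_\alpha(G))x_i$, and set $X_i=x_i\otimes j_{n_2}$. Using~\eqref{eq:direct} and property (8) of Proposition~\ref{prop_Kronecker} (the mixed-product rule, which also holds for rectangular compatible factors such as a vector $x_i$), I compute
\[
L_\alpha(G\odot H)X_i=\alpha\,(D(G)x_i)\otimes(D(H)j_{n_2})+(\alpha-1)\,(A(G)x_i)\otimes(A(H)j_{n_2}).
\]
Substituting $D(H)j_{n_2}=A(H)j_{n_2}=rj_{n_2}$ and using properties (1) and (4), this becomes
\[
r\,\big(\alpha D(G)+(\alpha-1)A(G)\big)x_i\otimes j_{n_2}=r\,\lambda_i(L_\alpha(G))\,X_i .
\]
Since $X_i\neq 0$, each $r\lambda_i(L_\alpha(G))$ is an eigenvalue of $L_\alpha(G\odot H)$.

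To get the inclusion counted with multiplicity (which the union over $i$ suggests), I will note that $(x_i\otimes j_{n_2})^T(x_k\otimes j_{n_2})=(x_i^Tx_k)\,n_2$ by properties (2) and (8). Hence the vectors $X_i$ are pairwise orthogonal and therefore linearly independent. It follows that an eigenvalue repeated $s$ times among the $r\lambda_i(L_\alpha(G))$ has at least $s$ independent eigenvectors.

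The only delicate point is justifying the mixed-product rule when one factor is a column vector, since Proposition~\ref{prop_Kronecker} is stated for square matrices. Here I would either check it directly blockwise, or embed the vector as the first column of an $n_1\times n_1$ matrix padded with zeros. The connectedness hypothesis on $G$ does not appear to be needed for this argument.
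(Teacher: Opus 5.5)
Your proposal is correct and follows essentially the same route as the paper: you tensor each eigenvector $x_i$ of $L_\alpha(G)$ with $j_{n_2}$ and use $D(H)j_{n_2}=A(H)j_{n_2}=rj_{n_2}$ in the mixed-product rule to get $L_\alpha(G\odot H)(x_i\otimes j_{n_2})=r\,\lambda_i(L_\alpha(G))\,(x_i\otimes j_{n_2})$. Your extra points are correct refinements not in the paper's proof: the orthogonality argument giving multiplicities, the justification of the mixed-product rule for vector factors, and the observation that connectedness of $G$ is never used.
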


\begin{proof}
Let $x_i$ be an eigenvector of $L_\alpha(G)$ associated with $\lambda_i(L_\alpha(G))$, and let $j_{n_2}$ be the all-ones vector. Since $H$ is $r$-regular, we have
\[
D(H)j_{n_2} = r j_{n_2}, \qquad A(H)j_{n_2} = r j_{n_2}.
\]
Using~\eqref{eq:direct}, we have
\begin{align*}
L_\alpha(G \odot H)(x_i \otimes j_{n_2})
&=
\alpha\,(D(G)x_i \otimes D(H)j_{n_2})
+
(\alpha-1)\,(A(G)x_i \otimes A(H)j_{n_2}) \\
&=
r\Big(\alpha D(G)x_i + (\alpha-1)A(G)x_i\Big)\otimes j_{n_2} \\
&=
r\,L_\alpha(G)x_i \otimes j_{n_2} \\
&=
r\,\lambda_i(L_\alpha(G)) (x_i \otimes j_{n_2}),
\end{align*}
and the result follows.
\end{proof}

\begin{theorem}\label{teorema36}
Let $G$ be an $r_1$-regular graph of order $n_1$, and let $H$ be an $r_2$-regular graph of order $n_2$. For $\alpha \in [0,1],$
\[
\mathrm{Spec}(L_{\alpha}(G \odot H)) =
\bigcup_{i=1}^{n_{1}} \bigcup_{j=1}^{n_{2}}
\left\{
\alpha r_1 r_2 + (\alpha-1)\lambda_i(A(G))\lambda_j(A(H))
\right\}.
\]
\end{theorem}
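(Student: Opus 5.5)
Since $G$ is $r_1$-regular and $H$ is $r_2$-regular, every vertex $(u,v)$ of $G\odot H$ has degree $d_G(u)\,d_H(v)=r_1r_2$. Hence $D(G)=r_1I_{n_1}$ and $D(H)=r_2I_{n_2}$, so $D(G)\otimes D(H)=r_1r_2\,(I_{n_1}\otimes I_{n_2})=r_1r_2 I_{n_1n_2}$ by Proposition~\ref{prop_Kronecker} items (1) and (7). Substituting this into \eqref{eq:direct} gives
\[
L_\alpha(G\odot H)=\alpha r_1r_2\, I_{n_1n_2}+(\alpha-1)\bigl(A(G)\otimes A(H)\bigr).
\]
The problem therefore reduces to finding the spectrum of $A(G)\otimes A(H)$, followed by an affine shift.

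For that step I would repeat the diagonalization argument from the proof of Theorem~\ref{pk}. Because $A(G)$ and $A(H)$ are real symmetric, there are orthogonal matrices $X$ and $Y$ with $X^TA(G)X=\mathrm{diag}(\lambda_i(A(G)))$ and $Y^TA(H)Y=\mathrm{diag}(\lambda_j(A(H)))$. Then $X\otimes Y$ is orthogonal: by items (2), (7) and (8) of Proposition~\ref{prop_Kronecker},
\[
(X\otimes Y)^T(X\otimes Y)=(X^TX)\otimes(Y^TY)=I_{n_1}\otimes I_{n_2}=I_{n_1n_2}.
\]
Applying item (8) again,
\[
(X\otimes Y)^T\bigl(A(G)\otimes A(H)\bigr)(X\otimes Y)=(X^TA(G)X)\otimes(Y^TA(H)Y),
\]
and the Kronecker product of two diagonal matrices is diagonal with entries $\lambda_i(A(G))\lambda_j(A(H))$.

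Conjugating the whole expression for $L_\alpha(G\odot H)$ by $X\otimes Y$ leaves the scalar term $\alpha r_1r_2 I_{n_1n_2}$ unchanged. The result is the diagonal matrix with entries $\alpha r_1r_2+(\alpha-1)\lambda_i(A(G))\lambda_j(A(H))$ for $1\le i\le n_1$ and $1\le j\le n_2$, each pair $(i,j)$ counted once. This gives the claimed spectrum with multiplicities.

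I do not expect a real obstacle. The only point needing care is the degree matrix: one must check that $D(G\odot H)$ is genuinely the scalar matrix $r_1r_2 I$, so that it commutes with everything and passes through the conjugation unchanged. Without regularity, $D(G)\otimes D(H)$ and $A(G)\otimes A(H)$ would not be simultaneously diagonalizable by $X\otimes Y$, and the argument would fail.
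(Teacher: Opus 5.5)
Your proof is correct and is essentially the paper's argument. The paper checks directly that $x_i\otimes y_j$ is an eigenvector of $L_\alpha(G\odot H)$ with eigenvalue $\alpha r_1r_2+(\alpha-1)\lambda_i(A(G))\lambda_j(A(H))$, and these vectors are exactly the columns of your orthogonal matrix $X\otimes Y$; your similarity formulation also makes explicit that these $n_1n_2$ eigenvectors form a basis, so equality of spectra with multiplicities (not just containment) follows, a point the paper leaves implicit.
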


\begin{proof}
Let $x_i$ and $y_j$ be eigenvectors of $A(G)$ and $A(H)$ corresponding to $\lambda_i(A(G))$ and $\lambda_j(A(H))$, respectively. Since $G$ and $H$ are regular, we also have
\[
D(G)x_i = r_1 x_i, \qquad D(H)y_j = r_2 y_j.
\]
Using~\eqref{eq:direct}, we obtain
\begin{align*}
L_\alpha(G \odot H)(x_i \otimes y_j)
&=
\alpha (D(G)x_i \otimes D(H)y_j)
+
(\alpha-1)(A(G)x_i \otimes A(H)y_j) \\
&=
\alpha r_1 r_2 (x_i \otimes y_j)
+
(\alpha-1)\lambda_i(A(G))\lambda_j(A(H))(x_i \otimes y_j),
\end{align*}
and the result follows.
\end{proof}

Since $G \times H$ and $G \odot H$ are edge-disjoint subgraphs of $G \otimes H$ and $E(G \times H)\cup E(G \odot H)=E(G\otimes H),$
it follows that
\[
A(G \otimes H) = A(G \times H) + A(G\odot H), \qquad 
D(G \otimes H) = D(G\times H) + D(G\odot H).
\]
Consequently,
\begin{equation}\label{eq:otimes}
L_\alpha(G \otimes H)
=
L_\alpha(G \times H) + L_\alpha(G\odot H).
\end{equation}

The next theorems involve the $L_\alpha(G \otimes  H)-$spectrum.

\begin{theorem}
Let $H$ be an $r$-regular graph of order $n_2$, and let $G$ be a connected graph of order $n_1$. For $\alpha \in [0,1]$, we have
\[
\left\{
(r+1)\lambda_i(L_\alpha(G)) + (2\alpha r - r)
\right\}
\subseteq
\mathrm{Spec}(L_\alpha(G \otimes H)),
\]
for $1 \le i \le n_1.$
\end{theorem}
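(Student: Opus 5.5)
The plan is to use the decomposition \eqref{eq:otimes}, $L_\alpha(G\otimes H)=L_\alpha(G\times H)+L_\alpha(G\odot H)$. I will exhibit explicit eigenvectors of the form $x_i\otimes j_{n_2}$, exactly as in the proof of Theorem~\ref{teorema35}. Here $x_i$ is an eigenvector of $L_\alpha(G)$ associated with $\lambda_i(L_\alpha(G))$, chosen from an orthogonal eigenbasis, which exists since $L_\alpha(G)$ is symmetric. The vector $j_{n_2}$ is the all-ones vector of $\mathbb{R}^{n_2}$. Since $x_i\otimes j_{n_2}\neq 0$, it suffices to show that both summands act on it as scalars and then add the two scalars.

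\textbf{Step 1 (Cartesian part).} By \eqref{eq:cartesian} and property (8) of Proposition~\ref{prop_Kronecker},
\[
L_\alpha(G\times H)(x_i\otimes j_{n_2}) = L_\alpha(G)x_i\otimes j_{n_2} + x_i\otimes L_\alpha(H)j_{n_2}.
\]
Because $H$ is $r$-regular, $D(H)j_{n_2}=A(H)j_{n_2}=r\,j_{n_2}$. Hence
\[
L_\alpha(H)j_{n_2}=\bigl(\alpha r+(\alpha-1)r\bigr)j_{n_2}=(2\alpha-1)r\,j_{n_2}.
\]
Therefore this term equals $\bigl(\lambda_i(L_\alpha(G))+(2\alpha-1)r\bigr)(x_i\otimes j_{n_2})$.

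\textbf{Step 2 (direct part).} This is precisely the computation in the proof of Theorem~\ref{teorema35}, which uses \eqref{eq:direct} together with $D(H)j_{n_2}=A(H)j_{n_2}=rj_{n_2}$. It gives
\[
L_\alpha(G\odot H)(x_i\otimes j_{n_2})=r\,\lambda_i(L_\alpha(G))(x_i\otimes j_{n_2}).
\]

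\textbf{Step 3 (combine).} Adding the two steps via \eqref{eq:otimes} yields
\[
L_\alpha(G\otimes H)(x_i\otimes j_{n_2})=\bigl((r+1)\lambda_i(L_\alpha(G))+2\alpha r-r\bigr)(x_i\otimes j_{n_2}).
\]
This holds for every $1\le i\le n_1$, which gives the stated inclusion. Since the $x_i$ are linearly independent, the vectors $x_i\otimes j_{n_2}$ are also linearly independent, so repeated values are realized with at least their multiplicity in $L_\alpha(G)$.

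\textbf{Main obstacle.} The only delicate point is the validity of \eqref{eq:otimes} itself, specifically the identity $D(G\otimes H)=D(G\times H)+D(G\odot H)$, together with the Kronecker formulas for the degree matrices. The degree identity holds because the edge sets of $G\times H$ and $G\odot H$ partition $E(G\otimes H)$, and the paper states this before the theorem. The connectivity hypothesis on $G$ is not actually needed for the argument.
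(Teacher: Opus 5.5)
Your proposal is correct and matches the paper's proof: split $L_\alpha(G\otimes H)$ via \eqref{eq:otimes}, apply the Cartesian part to $x_i\otimes j_{n_2}$ using $L_\alpha(H)j_{n_2}=(2\alpha r-r)j_{n_2}$, reuse the computation of Theorem~\ref{teorema35} for the direct part, and add the two scalars. You also correctly note that connectivity of $G$ is never used, and you write the Cartesian term in the correct Kronecker order $x_i\otimes L_\alpha(H)j_{n_2}$, whereas the paper's text has the factors swapped.
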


\begin{proof}
Let $x_i$ be an eigenvector of $L_\alpha(G)$ associated with $\lambda_i(L_\alpha(G))$, and let $j_{n_2}$ be the all-ones vector. From~\eqref{eq:cartesian}, we have
\[
L_\alpha(G \times H)(x_i \otimes j_{n_2})
=
\lambda_i(L_\alpha(G)) (x_i \otimes j_{n_2})
+
L_\alpha(H)j_{n_2} \otimes x_i.
\]
Since $H$ is $r$-regular,
\[
L_\alpha(H)j_{n_2} = (2\alpha r - r)j_{n_2}.
\]
Moreover, by Theorem~\ref{teorema35},
\[
L_\alpha(G \odot H)(x_i \otimes j_{n_2})
=
r\,\lambda_i(L_\alpha(G))(x_i \otimes j_{n_2}).
\]
Combining these expressions with~\eqref{eq:otimes}, we obtain
\[
L_\alpha(G \otimes H)(x_i \otimes j_{n_2})
=
\big[(r+1)\lambda_i(L_\alpha(G)) + (2\alpha r - r)\big](x_i \otimes j_{n_2}),
\]
and the resul follows.
\end{proof}

\begin{theorem}
Let $G$ be an $r_1$-regular graph of order $n_1$ and $H$ an $r_2$-regular graph of order $n_2$. For $\alpha \in [0,1],$
\begin{equation*}
\begin{aligned}
\mathrm{Spec}(L_{\alpha}(G \otimes H)) 
= \bigcup_{i=1}^{n_{1}} \bigcup_{j=1}^{n_{2}} \Big\{ \,
& \alpha r_{1} r_{2} + (\alpha-1) \lambda_{i}(A(G)) \lambda_{j}(A(H)) \\[4pt]
& + \alpha (r_1 + r_2) + (\alpha-1)(\lambda_i(A(G)) + \lambda_j(A(H)))
\Big\},
\end{aligned}
\end{equation*}
for $1 \le i \le n_1$ and $1 \le j \le n_2.$
\end{theorem}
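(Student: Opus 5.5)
The plan is to use the decomposition \eqref{eq:otimes}, $L_\alpha(G\otimes H)=L_\alpha(G\times H)+L_\alpha(G\odot H)$, and to exhibit a common orthogonal eigenbasis for both summands. Since $A(G)$ and $A(H)$ are real symmetric, we may fix orthonormal eigenbases $x_1,\dots,x_{n_1}$ of $A(G)$ and $y_1,\dots,y_{n_2}$ of $A(H)$, with $A(G)x_i=\lambda_i(A(G))x_i$ and $A(H)y_j=\lambda_j(A(H))y_j$. Because $G$ is $r_1$-regular and $H$ is $r_2$-regular, we have $D(G)=r_1I_{n_1}$ and $D(H)=r_2I_{n_2}$. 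Hence every $x_i$ is also an eigenvector of $L_\alpha(G)=\alpha r_1 I+(\alpha-1)A(G)$, with eigenvalue $\alpha r_1+(\alpha-1)\lambda_i(A(G))$, and likewise for $y_j$ and $L_\alpha(H)$.

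\textbf{Step 1: the Cartesian summand.} Using \eqref{eq:cartesian} and property (8) of Proposition~\ref{prop_Kronecker}, compute
\[
L_\alpha(G\times H)(x_i\otimes y_j)=L_\alpha(G)x_i\otimes y_j+x_i\otimes L_\alpha(H)y_j .
\]
This equals
\[
\bigl[\alpha(r_1+r_2)+(\alpha-1)(\lambda_i(A(G))+\lambda_j(A(H)))\bigr](x_i\otimes y_j),
\]
which is exactly what Theorem~\ref{pk} gives in the regular case.

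\textbf{Step 2: the direct-product summand.} By the computation in the proof of Theorem~\ref{teorema36},
\[
L_\alpha(G\odot H)(x_i\otimes y_j)=\bigl[\alpha r_1r_2+(\alpha-1)\lambda_i(A(G))\lambda_j(A(H))\bigr](x_i\otimes y_j).
\]

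\textbf{Step 3: combine and count.} Adding the two identities via \eqref{eq:otimes} shows that $x_i\otimes y_j$ is an eigenvector of $L_\alpha(G\otimes H)$ with the eigenvalue displayed in the statement. The vectors $x_i\otimes y_j$ are $n_1n_2$ in number. They are orthonormal, since $(x_i\otimes y_j)^T(x_k\otimes y_l)=(x_i^Tx_k)(y_j^Ty_l)$ by properties (2) and (8). So they form an eigenbasis of $\mathbb{R}^{n_1n_2}$, and the listed values exhaust the spectrum with multiplicities. Equivalently, one can conjugate by the orthogonal matrix $X\otimes Y$, as in the proof of Theorem~\ref{pk}, and obtain a diagonal matrix.

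\textbf{Main obstacle.} The only delicate point is justifying $D(G\otimes H)=D(G\times H)+D(G\odot H)$, which the paper asserts in deriving \eqref{eq:otimes}. I would check it directly: in the strong product, the degree of $(u,v)$ is $d_G(u)+d_H(v)+d_G(u)d_H(v)$. The term $d_G(u)+d_H(v)$ is the Cartesian-product degree and $d_G(u)d_H(v)$ is the direct-product degree, since the two edge sets are disjoint and their union is $E(G\otimes H)$. With this verified, the rest is routine Kronecker algebra.
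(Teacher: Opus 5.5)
Your proof is correct and follows the paper's route: split $L_\alpha(G\otimes H)=L_\alpha(G\times H)+L_\alpha(G\odot H)$, then check that each $x_i\otimes y_j$ is a common eigenvector of both summands, so the two eigenvalues add to the stated value. Your orthonormality argument for the $n_1n_2$ vectors $x_i\otimes y_j$, together with your degree check for the strong product, makes explicit what the paper leaves implicit: that these eigenvalues exhaust the spectrum with the correct multiplicities.
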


\begin{proof}
Let $x_i$ and $y_j$ be eigenvectors of $A(G)$ and $A(H)$ associated with $\lambda_i(A(G))$ and $\lambda_j(A(H))$, respectively. Since $G$ and $H$ are regular,
\[
D(G)x_i = r_1 x_i, \qquad D(H)y_j = r_2 y_j.
\]
Using~\eqref{eq:cartesian},~\eqref{eq:direct}, and~\eqref{eq:otimes}, a direct computation we have
\begin{align*}
L_\alpha(G \otimes H)(x_i \otimes y_j)
&=
\Big[
\alpha(r_1 r_2 + r_1 + r_2)
+ (\alpha-1)\big(\lambda_i(A(G))\lambda_j(A(H)) \\
&\qquad + \lambda_i(A(G)) + \lambda_j(A(H))\big)
\Big](x_i \otimes y_j).
\end{align*}
which completes the proof.
\end{proof}

The next result presents an expression for the characteristic polynomial of $L_\alpha$ for graphs obtained by coalescence between the vertices of two graphs. We denote by $L_\alpha(G)_u$ the principal sub-matrix of $L_\alpha(G)$ obtained by removing the row and column corresponding to vertex $u$, whose characteristic polynomial is denoted by $P_{L_\alpha(G)_u}(\lambda)$.

\begin{theorem}\label{thm:coalescence}
Let $G$ and $H$ be disjoint graphs, and let $G\cdot H$ be the graph obtained by identifying a vertex $u \in V(G)$ with a vertex $v \in V(H)$. Then
\[
P_{L_{\alpha}(G\cdot H)}(\lambda)
=
P_{L_{\alpha}(G)}(\lambda)P_{L_{\alpha}(H)_v}(\lambda)
+
P_{L_{\alpha}(G)_u}(\lambda)P_{L_{\alpha}(H)}(\lambda)
-
\lambda\,P_{L_{\alpha}(G)_u}(\lambda)P_{L_{\alpha}(H)_v}(\lambda),
\]
for $\alpha \in [0,1].$
\end{theorem}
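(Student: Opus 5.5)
The plan is to write $\lambda I - L_\alpha(G\cdot H)$ in block form around the identified vertex $w$ (the common image of $u$ and $v$), expand its determinant with Corollary~\ref{cor:schur-blockdiag}, and compare the result with the analogous expansions of $P_{L_\alpha(G)}(\lambda)$ and $P_{L_\alpha(H)}(\lambda)$ from Proposition~\ref{prop:schur-scalar}.

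\textbf{Step 1: the three matrices.} Order the vertices of $G\cdot H$ as $w$ first, then $V(G)\setminus\{u\}$, then $V(H)\setminus\{v\}$. The only degree that changes under coalescence is that of $w$, namely $d(w)=d_G(u)+d_H(v)$; every other vertex keeps its degree in $G$ or in $H$. There are no edges between $V(G)\setminus\{u\}$ and $V(H)\setminus\{v\}$. Hence
\[
L_\alpha(G\cdot H)=\begin{bmatrix} a+b & \nu^T & \rho^T\\ \nu & L_\alpha(G)_u & 0\\ \rho & 0 & L_\alpha(H)_v\end{bmatrix},
\]
where $a=\alpha d_G(u)$ and $b=\alpha d_H(v)$. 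Here $\nu$ is $(\alpha-1)$ times the indicator vector of $N_G(u)$, and $\rho$ is $(\alpha-1)$ times the indicator vector of $N_H(v)$. The key observation is that the same $a,\nu$ and the same submatrix $L_\alpha(G)_u$ appear when $L_\alpha(G)$ is written with $u$ first, and likewise $b,\rho$ and $L_\alpha(H)_v$ appear in $L_\alpha(H)$ with $v$ first.

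\textbf{Step 2: Schur expansions.} Put $X=\lambda I-L_\alpha(G)_u$ and $Y=\lambda I-L_\alpha(H)_v$. Fix $\lambda$ outside the finite set of eigenvalues of $L_\alpha(G)_u$ and $L_\alpha(H)_v$, so that $X$ and $Y$ are invertible. Corollary~\ref{cor:schur-blockdiag}, applied to $\lambda I-L_\alpha(G\cdot H)$ (whose off-diagonal blocks are $-\nu$ and $-\rho$; the signs cancel in the quadratic forms), gives
\[
P_{L_\alpha(G\cdot H)}(\lambda)=\det X\det Y\bigl(\lambda-a-b-\nu^TX^{-1}\nu-\rho^TY^{-1}\rho\bigr).
\]
Proposition~\ref{prop:schur-scalar}, applied to $L_\alpha(G)$ and $L_\alpha(H)$, gives
\[
P_{L_\alpha(G)}(\lambda)=\det X\,(\lambda-a-\nu^TX^{-1}\nu),\qquad P_{L_\alpha(H)}(\lambda)=\det Y\,(\lambda-b-\rho^TY^{-1}\rho).
\]
Also $\det X=P_{L_\alpha(G)_u}(\lambda)$ and $\det Y=P_{L_\alpha(H)_v}(\lambda)$.

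\textbf{Step 3: combine.} Consider
\[
P_{L_\alpha(G)}P_{L_\alpha(H)_v}+P_{L_\alpha(G)_u}P_{L_\alpha(H)}.
\]
This equals $\det X\det Y$ multiplied by
\[
2\lambda-a-b-\nu^TX^{-1}\nu-\rho^TY^{-1}\rho .
\]
Subtracting $\lambda\det X\det Y$ leaves exactly the expression for $P_{L_\alpha(G\cdot H)}(\lambda)$ from Step 2. So the claimed identity holds for all $\lambda$ outside a finite set. Both sides are polynomials in $\lambda$, so Corollary~\ref{cor:fora_do_finito} upgrades this to an identity of polynomials.

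\textbf{Main obstacle.} The only delicate point is bookkeeping in Step 1. One must check that the principal submatrix $L_\alpha(G)_u$ keeps the original $G$-degrees of the neighbours of $u$, and that these coincide with their degrees in $G\cdot H$. This holds because those neighbours are adjacent to $w$ in $G\cdot H$, exactly as they were adjacent to $u$ in $G$. It is also this fact that makes the merged diagonal entry equal to exactly $a+b$.
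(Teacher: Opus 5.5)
Your proposal is correct and follows essentially the same route as the paper: you order the vertices with the identified vertex first, apply the block Schur expansion to $\lambda I - L_\alpha(G\cdot H)$, use the scalar Schur expansions of $P_{L_\alpha(G)}$ and $P_{L_\alpha(H)}$, and extend the identity from the cofinite set where $X$ and $Y$ are invertible to an identity of polynomials. The differences are only cosmetic: you take $\nu,\rho$ with entries $\alpha-1$ where the paper uses $1-\alpha$, which does not affect the quadratic forms, and you state explicitly the degree bookkeeping that the paper leaves implicit.
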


\begin{proof}
Order the vertices of $G\cdot H$ so that the first vertex is the identified one, followed by the vertices of $G\setminus\{u\}$ and then those of $H\setminus\{v\}$, we have
\[
\lambda I - L_\alpha(G\cdot H)=
\begin{bmatrix}
\lambda-\alpha(d_G(u)+d_H(v)) & \nu^T & \rho^T\\
\nu & \lambda I - L_\alpha(G)_u & 0\\
\rho & 0 & \lambda I - L_\alpha(H)_v
\end{bmatrix},
\]
where $\nu$ and $\rho$ are column vectors whose nonzero entries, corresponding to the neighbors of $u$ in $G$ and $v$ in $H$, are equal to $1-\alpha$. Consider
\[
a=\lambda-\alpha(d_G(u)+d_H(v)), \qquad
X=\lambda I - L_\alpha(G)_u, \qquad
Y=\lambda I - L_\alpha(H)_v.
\]
Then
\[
P_{L_{\alpha}(G\cdot H)}(\lambda)
=
\det
\begin{bmatrix}
a & \nu^T & \rho^T\\
\nu & X & 0\\
\rho & 0 & Y
\end{bmatrix}.
\]

Applying Corollary~\ref{cor:schur-blockdiag}, we obtain
\begin{equation}\label{eq:coal-schur}
P_{L_{\alpha}(G\cdot H)}(\lambda)
=
\det(X)\det(Y)
\Bigl(
\lambda-\alpha(d_G(u)+d_H(v))
-\nu^TX^{-1}\nu
-\rho^TY^{-1}\rho
\Bigr).
\end{equation}

Now consider the decomposition of $\lambda I - L_\alpha(G)$ with respect to the vertex $u$:
\[
\lambda I - L_\alpha(G)=
\begin{bmatrix}
\lambda-\alpha d_G(u) & \nu^T\\
\nu & X
\end{bmatrix}.
\]
For all $\lambda$ such that $X$ is invertible, Proposition~\ref{prop:schur-scalar} yields
\[
P_{L_{\alpha}(G)}(\lambda)
=
\det(X)
\Bigl(
\lambda-\alpha d_G(u)-\nu^TX^{-1}\nu
\Bigr).
\]
Similarly,
\[
P_{L_{\alpha}(H)}(\lambda)
=
\det(Y)
\Bigl(
\lambda-\alpha d_H(v)-\rho^TY^{-1}\rho
\Bigr).
\]

Moreover,
\[
P_{L_{\alpha}(G)_u}(\lambda)=\det(X),
\qquad
P_{L_{\alpha}(H)_v}(\lambda)=\det(Y).
\]

Substituting these expressions, we obtain
\begin{align*}
&P_{L_{\alpha}(G)}(\lambda)P_{L_{\alpha}(H)_v}(\lambda)
+
P_{L_{\alpha}(G)_u}(\lambda)P_{L_{\alpha}(H)}(\lambda)
-
\lambda\,P_{L_{\alpha}(G)_u}(\lambda)P_{L_{\alpha}(H)_v}(\lambda)\\
&=
\det(X)\det(Y)
\Bigl(
\lambda-\alpha(d_G(u)+d_H(v))
-\nu^TX^{-1}\nu
-\rho^TY^{-1}\rho
\Bigr).
\end{align*}

Comparing the expression above with~\eqref{eq:coal-schur}, we conclude that
\[
P_{L_{\alpha}(G\cdot H)}(\lambda)
=
P_{L_{\alpha}(G)}(\lambda)P_{L_{\alpha}(H)_v}(\lambda)
+
P_{L_{\alpha}(G)_u}(\lambda)P_{L_{\alpha}(H)}(\lambda)
-
\lambda\,P_{L_{\alpha}(G)_u}(\lambda)P_{L_{\alpha}(H)_v}(\lambda).
\]

Finally, since both sides are polynomials in $\lambda$ and coincide for all $\lambda$ such that $X$ and $Y$ are invertible, they are identical by Proposition~\ref{prop:identidadepolinomial} and the proof is concluded.
\end{proof}

\subsection{Families of graphs}

In this subsection, we present the $L_{\alpha}-$spectrum of some families of graphs. Firstly, given a graph $ G$, consider  $S(G)$. With the convenient labeling  of $V(S(G))$ we have
\begin{align*}
A(S(G)) &= \begin{bmatrix} 
A(G) & A(G) \\ 
A(G) & 0 
\end{bmatrix}, \qquad
D(S(G)) = \begin{bmatrix} 
2D(G) & 0 \\ 
0 & D(G) 
\end{bmatrix},
\end{align*}
and
\begin{equation}\label{equacao6}
L_\alpha(S(G)) = \begin{bmatrix} 
2\alpha D(G) + (\alpha-1)A(G)  & (\alpha - 1) A(G) \\ 
(\alpha - 1) A(G) & \alpha D(G) 
\end{bmatrix}.
\end{equation}

Theorem~\ref{teorema39} presents the polynomial characteristic of $L_{\alpha}(S(G)).$

\begin{theorem}\label{teorema39}
Let $G$ be a $k$-regular graph of order $n$, and let $\lambda_i(A)$ denote the eigenvalues of $A(G)$, $1 \leq i \leq n$. For $\alpha \in [0,1],$ the characteristic polynomial of $L_\alpha(S(G))$ is given by
\[
\resizebox{\textwidth}{!}{$
P_{L_\alpha(S(G))}(x) = \prod_{i=1}^n \left(x^2 - 3\alpha kx + 2\alpha^2k^2 + (-(\alpha - 1)x + (\alpha - 1)\alpha k)\lambda_i(A) - (\alpha - 1)^2\lambda_i^2(A)\right).$}
\]
\end{theorem}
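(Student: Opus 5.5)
Since $G$ is $k$-regular, $D(G)=kI_n$. Substituting this into \eqref{equacao6} shows that every block of $L_\alpha(S(G))$ is a polynomial in $A=A(G)$:
\[
L_\alpha(S(G))=\begin{bmatrix} 2\alpha k I_n+(\alpha-1)A & (\alpha-1)A\\ (\alpha-1)A & \alpha k I_n\end{bmatrix}.
\]
The plan is to compute $P_{L_\alpha(S(G))}(x)=\det(xI_{2n}-L_\alpha(S(G)))$ with Theorem~\ref{determinante}. Because all four blocks of $xI_{2n}-L_\alpha(S(G))$ are polynomials in $A$, they pairwise commute, so any case of that theorem applies. I would use the case for $DC=CD$, which gives
\[
P_{L_\alpha(S(G))}(x)=\det\Big(\big((x-2\alpha k)I_n-(\alpha-1)A\big)(x-\alpha k)I_n-(\alpha-1)^2A^2\Big).
\]

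Next, I would diagonalize. $A$ is symmetric, so there is an orthogonal $U$ with $U^TAU=\mathrm{diag}(\lambda_1(A),\dots,\lambda_n(A))$. Conjugating the $n\times n$ matrix above by $U$ turns it into a diagonal matrix. Its $i$-th diagonal entry is
\[
(x-2\alpha k-(\alpha-1)\lambda_i(A))(x-\alpha k)-(\alpha-1)^2\lambda_i^2(A),
\]
and the determinant is the product of these entries over $1\le i\le n$.

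The last step is to expand each factor:
\[
x^2-3\alpha k x+2\alpha^2k^2-(\alpha-1)\lambda_i(A)x+(\alpha-1)\alpha k\lambda_i(A)-(\alpha-1)^2\lambda_i^2(A).
\]
This matches the stated expression $x^2-3\alpha kx+2\alpha^2k^2+(-(\alpha-1)x+(\alpha-1)\alpha k)\lambda_i(A)-(\alpha-1)^2\lambda_i^2(A)$.

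There is no real obstacle. The only point needing care is to state explicitly that the blocks commute, since they are polynomials in $A$ and the identity, so that Theorem~\ref{determinante} applies with the correct ordering of products.

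An alternative that avoids the block determinant formula is to work with eigenvectors directly. For each eigenvector $x_i$ of $A$, the $2$-dimensional subspace spanned by $(x_i,0)$ and $(0,x_i)$ is invariant under $L_\alpha(S(G))$. On it the matrix acts as
\[
\begin{bmatrix}2\alpha k+(\alpha-1)\lambda_i(A) & (\alpha-1)\lambda_i(A)\\ (\alpha-1)\lambda_i(A) & \alpha k\end{bmatrix},
\]
whose characteristic polynomial is exactly the $i$-th factor. Summing over an orthonormal eigenbasis of $A$ decomposes $\mathbb{R}^{2n}$ into these $n$ invariant subspaces, which again yields the product formula.
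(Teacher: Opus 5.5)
Your proposal is correct and follows essentially the same route as the paper. Both substitute $D(G)=kI_n$, apply Theorem~\ref{determinante} (the blocks are polynomials in $A$ and hence commute), and evaluate the resulting $n\times n$ determinant as a product over the eigenvalues of $A$; your explicit orthogonal diagonalization states that last step more precisely than the paper, whose intermediate line also has the typo $2\alpha k^2$ for $2\alpha^2k^2$, and your invariant-subspace alternative is an equally valid short proof.
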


\begin{proof}
Let $A(G)=A$. Using~\eqref{equacao6} and the fact that $G$ is $k$-regular, we obtain
\[
L_\alpha(S(G)) = 
\begin{bmatrix}
2\alpha k I_n + (\alpha - 1) A & (\alpha - 1) A \\
(\alpha - 1) A & \alpha k I_n
\end{bmatrix}.
\]
From Theorem~\ref{determinante}, we have

\resizebox{\textwidth}{!}{$
\begin{aligned}
P_{L_\alpha(S(G))}(x) &= \det\left(((x - 2\alpha k)I_n - (\alpha - 1)A)((x - \alpha k)I_n) - (\alpha - 1)^2A^2\right) \\
&= \det\left((x^2 - 3\alpha kx + 2\alpha^2k^2)I + (-(\alpha - 1)x + (\alpha - 1)\alpha k)A - (\alpha - 1)^2A^2\right)
\end{aligned}
$}

As $\displaystyle \det(xI_{2n} - L_\alpha(S(G))) = \prod_{i=1}^n \lambda_i(xI_{2n} - L_\alpha(S(G))),$ we have that

\resizebox{\textwidth}{!}{$
\begin{aligned}
\lambda_i(xI_{2n} - L_\alpha(S(G))) &= \lambda_i\left((x^2 - 3\alpha kx + 2\alpha^2k^2)I_n + (-(\alpha - 1)x + (\alpha - 1)\alpha k)A - (\alpha - 1)^2A^2\right) \\
& =  x^2 - 3\alpha kx + 2\alpha k^2 + (-(\alpha-1)x+(\alpha-1)\alpha k)\lambda_i(A) - (\alpha-1)^2\lambda_i^2(A).
\end{aligned}
$}
So
\[
\resizebox{\textwidth}{!}{$
P_{L_\alpha(S(G))}(x) = \displaystyle \prod_{i=1}^{n} \left(x^2 - 3\alpha kx + 2\alpha^2k^2 + (-(\alpha - 1)x + (\alpha - 1)\alpha k)\lambda_i(A) - (\alpha - 1)^2\lambda_i^2(A)\right),
$}
\]
and the result follows.
\end{proof}

The next results involve the spectrum of the families, $K_{p}^{q},$ $H_n^l,$ $KK_n^l$ and $\Theta(c, s, \eta)$.

\begin{theorem}
Let $\alpha \in [0,1)$ and let $G \simeq K_{p}^{q}$ with $p \geq 3$. Then
\[
\mathrm{Spec}(L_{\alpha}(G))=\{\alpha^{(q-1)},\; (\alpha(p-2)+1)^{(p-2)},\; x_1,x_2,x_3\},
\]
where $x_1,x_2,x_3$ are the roots of
\[
\resizebox{\textwidth}{!}{$
\begin{aligned}
f(x) =\ & x^3 + (-3\alpha p - \alpha q + 4\alpha + p - 2)x^2 \\
& + (2\alpha^3p^2 - 6\alpha^2p + 4\alpha^2 - \alpha p^2 + 2\alpha^2pq - 4\alpha^2q + 5\alpha p - 4\alpha - \alpha pq + 4\alpha q - p - q + 1)x \\
& + (2\alpha^3pq - 3\alpha^3q - 5\alpha^2pq + 8\alpha^2q + 4\alpha pq - 7\alpha q - pq + 2q).
\end{aligned}
$}
\]
\end{theorem}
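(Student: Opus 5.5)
We use the equitable partition of $V(K_p^q)$ into three cells and complement the quotient spectrum with eigenvectors coming from twin vertices. Label the vertices as follows: $w$ is the vertex of $K_p$ carrying the pendants, $u_1,\dots,u_{p-1}$ are the remaining clique vertices, and $z_1,\dots,z_q$ are the pendant vertices. Then $d(w)=p-1+q$, $d(u_i)=p-1$ and $d(z_j)=1$, and $L_\alpha(G)$ has the block form induced by the partition $\{w\}\cup\{u_1,\dots,u_{p-1}\}\cup\{z_1,\dots,z_q\}$. Every block has constant row sums, so this partition is equitable.

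First we produce the eigenvalues coming from twins, as in Theorem~\ref{theo:twins}, but with explicit eigenvectors so that multiplicities can be counted.
\begin{itemize}
\item The pendants $z_1,\dots,z_q$ are pairwise false twins. For each $2\le j\le q$, the vector $e_{z_1}-e_{z_j}$ satisfies $L_\alpha(G)(e_{z_1}-e_{z_j})=\alpha(e_{z_1}-e_{z_j})$. This holds because the common neighbour $w$ receives the entries $(\alpha-1)\cdot 1+(\alpha-1)\cdot(-1)=0$. These $q-1$ vectors are linearly independent, which gives $\alpha^{(q-1)}$.
\item The vertices $u_1,\dots,u_{p-1}$ are pairwise true twins. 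For each $2\le i\le p-1$, the vector $e_{u_1}-e_{u_i}$ has eigenvalue $\alpha(p-1)-(\alpha-1)=\alpha(p-2)+1$, since the off-diagonal entry $\alpha-1$ between $u_1$ and $u_i$ contributes $-(\alpha-1)$. This gives $(\alpha(p-2)+1)^{(p-2)}$.
\end{itemize}

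Next we apply Theorem~\ref{equitable} and Theorem~\ref{quociente} to the partition above. The quotient matrix is
\[
M=\begin{bmatrix}
\alpha(p-1+q) & (\alpha-1)(p-1) & (\alpha-1)q\\
\alpha-1 & \alpha(p-1)+(\alpha-1)(p-2) & 0\\
\alpha-1 & 0 & \alpha
\end{bmatrix}.
\]
Its three eigenvalues $x_1,x_2,x_3$ lie in $\mathrm{Spec}(L_\alpha(G))$, with eigenvectors obtained by lifting eigenvectors of $M$ to vectors that are constant on each cell.

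To see that the list is complete, note that every twin eigenvector above sums to zero on its cell and vanishes elsewhere. Hence it is orthogonal to every vector that is constant on the cells. The lifted quotient eigenvectors span the $3$-dimensional space of cell-constant vectors. Therefore all $(q-1)+(p-2)+3=p+q$ vectors together are linearly independent, and since $|V(G)|=p+q$ they exhaust the spectrum with the stated multiplicities.

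It remains to identify $\det(xI_3-M)$ with $f(x)$. We expand by cofactors along the third column, which contains only two nonzero off-diagonal entries:
\[
\det(xI-M)=(x-\alpha)\Big[(x-\alpha(p-1+q))(x-a)-(\alpha-1)^2(p-1)\Big]-(\alpha-1)^2q\,(x-a),
\]
where $a=\alpha(p-1)+(\alpha-1)(p-2)$. The remaining work is to expand this expression and collect powers of $x$.

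We expect this final coefficient comparison to be the main obstacle. The leading and $x^2$ coefficients should match directly, since the trace of $M$ must equal the negative of the $x^2$ coefficient of $f$. The lower coefficients require careful bookkeeping of the powers of $\alpha$. The entries of $M$ are affine in $\alpha$, so the coefficient of $x$ in $\det(xI-M)$ has degree at most $2$ in $\alpha$. The $\alpha^3$ terms printed in the $x$-coefficient of $f$ therefore need checking, and in that case we would trust the determinant computation over the printed polynomial. Where useful, we would also check the result against the special case $q=1$, or against $\alpha=1/2$, where $L_\alpha=\tfrac12L$ and hence $0$ must be a root.
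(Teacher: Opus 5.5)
Your overall strategy is the paper's: the equitable partition $\{w\}\cup\{u_i\}\cup\{z_j\}$ together with twin eigenvectors. The structural part is sound, and it is more careful than the paper's proof in two ways:
\begin{itemize}
\item Your quotient matrix is the correct one. The paper's puts $\alpha-1$ in the hub row, where the row sums are $(\alpha-1)(p-1)$ and $(\alpha-1)q$.
\item You actually prove the multiplicities, whereas the paper only cites Theorem~\ref{theo:twins}.
\end{itemize}
One small justification is missing. The claim that the lifted quotient eigenvectors span the cell-constant space $W$ needs $M$ to be diagonalizable. This holds because $\mathrm{diag}(1,p-1,q)\,M$ is symmetric. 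Alternatively, you can bypass it: the twin vectors span $W^\perp$, so $W^\perp$, and hence $W$, is $L_\alpha(G)$-invariant. This gives $P_{L_\alpha(G)}(x)=\det(xI_3-M)\,(x-\alpha)^{q-1}\,(x-\alpha(p-2)-1)^{p-2}$.

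The genuine gap is the final step. The identity $\det(xI_3-M)=f(x)$ is false, so the statement as printed cannot be proved, and your expectation that the $x^2$ coefficients ``match directly'' is wrong. Indeed $\operatorname{tr}M=\alpha(3p+q-3)-p+2$; equivalently, this is $\operatorname{tr}L_\alpha(G)=\alpha(p(p-1)+2q)$ minus the listed twin eigenvalues. The printed $f$ instead forces $x_1+x_2+x_3=\alpha(3p+q-4)-p+2$, and these differ for every $\alpha\in(0,1)$.

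A concrete counterexample is $p=3$, $q=1$, $\alpha=\tfrac12$. This is the paw graph, where $L_{1/2}=\tfrac12 L(G)$ has spectrum $\{0,\tfrac12,\tfrac32,2\}$. The printed polynomial gives $f(0)=\tfrac18\neq 0$, yet $0$ must be one of $x_1,x_2,x_3$.

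What your argument does prove is the corrected theorem, with $f$ replaced by
\begin{align*}
\det(xI_3-M) &= x^3+(-3\alpha p-\alpha q+3\alpha+p-2)\,x^2\\
&\quad+\bigl((2p^2+2pq-3p-3q)\alpha^2-(p^2+pq-4p-4q+2)\alpha-(p+q-1)\bigr)\,x\\
&\quad-2(p-1)(p-2)\alpha^3+(p^2-4pq-5p+6q+4)\alpha^2+(4pq+p-7q-1)\alpha-q(p-2).
\end{align*}
This agrees with the printed $f$ only at $\alpha=0$. Two sanity checks: its constant term vanishes at $\alpha=\tfrac12$, and at $\alpha=1$ it factors as $(x-p-q+1)(x-p+1)(x-1)$. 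You should carry out this expansion and say explicitly that the printed $f$ is wrong, rather than leaving the comparison as an anticipated obstacle.
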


\begin{proof}
%Order the vertices so that
Using a convenient labeling of the vertices of $G$, we have
\[
L_\alpha(G) = \begin{bmatrix} 
B_{11} & (\alpha-1)J_{(p-1)\times 1} & 0_{(p-1)\times q} \\ 
(\alpha-1)J_{1\times (p-1)} & (p+q-1)\alpha & (\alpha-1)J_{1\times q} \\
0_{q\times (p-1)} & (\alpha-1)J_{q\times 1} & \alpha I_q
\end{bmatrix},
\]
where
\[
B_{11}=\alpha(p-1)I_{p-1}+(\alpha-1)(J_{p-1}-I_{p-1}).
\]
%The corresponding quotient matrix is
Applying Theorem~\ref{equitable} to the matrix above, we obtain the quotient matrix 
\[
M_{L_\alpha(G)} =
\begin{bmatrix} 
\alpha(p-1) + (\alpha-1)(p-2) & \alpha-1 & 0 \\ 
\alpha-1 & (p+q-1)\alpha & \alpha-1\\
0 & \alpha-1 & \alpha 
\end{bmatrix}
\]
whose characteristic polynomial is $f(x)$. From Theorem~\ref{quociente}, its eigenvalues belong to $\mathrm{Spec}(L_\alpha(G))$. Moreover, by Theorem~\ref{theo:twins}, the eigenvalue $\alpha$ has multiplicity $q-1$, and $\alpha(p-2)+1$ has multiplicity $p-2$. So the result follows.
\end{proof}

\begin{theorem}\label{hln}
Let $\alpha \in [0,1)$ and let $G \simeq H_n^l$, with $1 \leq l < n$. Then
\[
\mathrm{Spec}(L_\alpha(G)) =
\{(n\alpha)^{(l-1)},\; (\alpha(n-2)+1)^{(2n-2l-2)},\; (\alpha(n-2)+2)^{(l-1)},\; \theta_1, \theta_2, \theta_3, \theta_4 \},
\]
where
\[
\resizebox{\textwidth}{!}{$
\begin{aligned}
\theta_1 &= \frac{3\alpha n}{2} - \alpha - \frac{n}{2}
- \frac{\sqrt{8\alpha^2 l + \alpha^2 n^2 - 4\alpha^2 n + 4\alpha^2
- 12\alpha l - 2\alpha n^2 + 6\alpha n - 4\alpha
+ 4l + n^2 - 2n + 1}}{2}
+ \frac{1}{2}, \\[6pt]
\theta_2 &= \frac{3\alpha n}{2} - \alpha - \frac{n}{2}
+ \frac{\sqrt{8\alpha^2 l + \alpha^2 n^2 - 4\alpha^2 n + 4\alpha^2
- 12\alpha l - 2\alpha n^2 + 6\alpha n - 4\alpha
+ 4l + n^2 - 2n + 1}}{2}
+ \frac{1}{2}, \\[6pt]
\theta_3 &= \frac{3\alpha n}{2} - 2\alpha - \frac{n}{2}
- \frac{\sqrt{\alpha^2 n^2 + 4\alpha l - 2\alpha n^2
- 2\alpha n - 4l + n^2 + 2n + 1}}{2}
+ \frac{3}{2}, \\[6pt]
\theta_4 &= \frac{3\alpha n}{2} - 2\alpha - \frac{n}{2}
+ \frac{\sqrt{\alpha^2 n^2 + 4\alpha l - 2\alpha n^2
- 2\alpha n - 4l + n^2 + 2n + 1}}{2}
+ \frac{3}{2}.
\end{aligned}
$}
\]
\end{theorem}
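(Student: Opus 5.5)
Label the vertices of $H_n^l$ as $V_1\cup W_1\cup V_2\cup W_2$, where $V_1,V_2$ are the $l$ matched vertices in the first and second copy of $K_n$ (matched $v_{1,t}\sim v_{2,t}$), and $W_1,W_2$ are the remaining $n-l$ vertices of each copy. Matched vertices have degree $n$ and unmatched ones degree $n-1$. In this ordering,
\[
L_\alpha(G)=\begin{bmatrix}
B_n & (\alpha-1)J & (\alpha-1)I_l & 0\\
(\alpha-1)J & B_{n-1} & 0 & 0\\
(\alpha-1)I_l & 0 & B_n & (\alpha-1)J\\
0 & 0 & (\alpha-1)J & B_{n-1}
\end{bmatrix},
\]
with $B_n=\alpha n I_l+(\alpha-1)(J_l-I_l)$ and $B_{n-1}=\alpha(n-1)I_{n-l}+(\alpha-1)(J_{n-l}-I_{n-l})$. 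The plan is to exhibit $2n-4$ explicit eigenvectors, orthogonal to block-constant vectors, and then obtain the four remaining eigenvalues from the $4\times 4$ quotient matrix via Theorem~\ref{equitable} and Theorem~\ref{quociente}.

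First, the vertices in $W_1$ are pairwise true twins of degree $n-1$, and likewise for $W_2$. So Theorem~\ref{theo:twins} (or directly $e_a-e_b$) gives the eigenvalue $\alpha(n-2)+1$ with multiplicity $(n-l-1)$ from each copy, i.e.\ $2n-2l-2$ in total. For the matched vertices, take $z\in\mathbb{R}^l$ with $z\perp j_l$ and form $(z,0,\pm z,0)$. Then $J z=0$, $B_n z=(\alpha n-\alpha+1)z$, and the off-diagonal block contributes $\pm(\alpha-1)z$. Hence $(z,0,z,0)$ has eigenvalue $\alpha n-\alpha+1+\alpha-1=\alpha n$, and $(z,0,-z,0)$ has eigenvalue $\alpha n-2\alpha+2=\alpha(n-2)+2$, each with multiplicity $l-1$. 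All these vectors are linearly independent and orthogonal to the four block indicator vectors, so together they account for $2n-4$ eigenvalues.

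The remaining four eigenvalues are those of the quotient matrix
\[
M=\begin{bmatrix}
\alpha n+(\alpha-1)(l-1)+(\alpha-1) & (\alpha-1)(n-l) & \ast & 0\\
(\alpha-1)l & \alpha(n-1)+(\alpha-1)(n-l-1) & 0 & 0\\
\ast & 0 & \cdot & \cdot\\
0&0&\cdot&\cdot
\end{bmatrix}.
\]
Here the block $(1,3)$ has row sum $\alpha-1$, and the $(1,1)$ entry should be taken as $\alpha n+(\alpha-1)(l-1)$ with the $\alpha-1$ placed in position $(1,3)$. By the symmetry swapping the two copies, $M=\begin{bmatrix}P&Q\\Q&P\end{bmatrix}$ with $Q=\mathrm{diag}(\alpha-1,0)$. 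Its spectrum is therefore $\mathrm{Spec}(P+Q)\cup\mathrm{Spec}(P-Q)$, which splits it into two $2\times2$ problems, each giving a quadratic. The quadratic from $P+Q$ (symmetric vectors) should yield $\theta_1,\theta_2$; checking $l=n$ formally, $\theta_2=2\alpha n-n$ should appear, consistent with $\alpha$-regular behavior. The quadratic from $P-Q$ yields $\theta_3,\theta_4$. I would compute the trace and determinant of each $2\times2$ matrix and match them against $\theta_1+\theta_2=3\alpha n-2\alpha-n+1$ and $\theta_3+\theta_4=3\alpha n-4\alpha-n+3$.

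The main obstacle is this algebraic verification: matching the discriminants to the stated radicands. Everything else is structural. Completeness then follows from a count, $(l-1)+(2n-2l-2)+(l-1)+4=2n$, together with the fact that quotient eigenvectors lifted to block-constant vectors are orthogonal to all the explicit ones, so multiplicities add correctly.
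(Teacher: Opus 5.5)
Your proposal is correct and is essentially the paper's proof: the same block form of $L_\alpha(H_n^l)$, twin vectors for $\alpha(n-2)+1$, the vectors $(z,0,\pm z,0)$ with $z\perp j_l$ for $n\alpha$ and $\alpha(n-2)+2$ (the paper's $u_k,v_k$ are the case $z=e_1-e_k$), and the $4\times 4$ quotient matrix for the remaining four eigenvalues. Your splitting of the quotient into $P+Q$ and $P-Q$, and your invariant-subspace count, supply justification the paper only asserts. The algebra you deferred does go through: the discriminants $\bigl(\alpha+(\alpha-1)(2l-n\pm1)\bigr)^2+4(\alpha-1)^2l(n-l)$ expand exactly to the two stated radicands (one small slip in your aside: at $l=n$ with $\alpha<\frac{n+1}{n+2}$, the value $2\alpha n-n$ is $\theta_1$, not $\theta_2$).
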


\begin{proof}
Let $G \simeq H_n^l$, where $1 \leq l < n$. Using a convenient labeling of the vertices of $G$, the matrix $L_\alpha(G)$ can be written as
\[
L_\alpha(G)=
\begin{bmatrix}
B_{22} & (\alpha-1)J_{(n-l)\times l} & 0 & 0\\[4pt]
(\alpha-1)J_{l\times(n-l)} & B_{11} & (\alpha-1)I_l & 0\\[4pt]
0 & (\alpha-1)I_l & B_{11} & (\alpha-1)J_{l\times(n-l)}\\[4pt]
0 & 0 & (\alpha-1)J_{(n-l)\times l} & B_{22}
\end{bmatrix},
\]
where
\[
B_{11}=n\alpha I_l+(J_l-I_l)(\alpha-1),
\qquad
B_{22}=(n-1)\alpha I_{(n-l)}+(J_{(n-l)}-I_{(n-l)})(\alpha-1).
\]

Applying Theorem~\ref{equitable} in $L_\alpha(G)$, we obtain the quotient matrix
\[
M_{L_\alpha(G)}=
\begin{bmatrix}
b_{22} & l(\alpha-1) & 0 & 0\\[4pt]
(\alpha-1)(n-l) & b_{11} & \alpha-1 & 0\\[4pt]
0 & \alpha-1 & b_{11} & (\alpha-1)(n-l)\\[4pt]
0 & 0 & l(\alpha-1) & b_{22}
\end{bmatrix},
\]
where
\[
b_{11}=\alpha n+(\alpha-1)(l-1),
\qquad
b_{22}=\alpha(n-1)+(\alpha-1)(n-l-1).
\]

The eigenvalues of $M_{L_\alpha(G)}$ are precisely $\theta_1,\theta_2,\theta_3,\theta_4$ which belong to $\mathrm{Spec}(L_\alpha(G))$. Moreover, from Theorem~\ref{theo:twins}, $\alpha(n-2)+1$ is an eigenvalue of $L_\alpha(G)$ with multiplicity $2(n-l-1)$.

Let $e_i \in \mathbb{R}^{2n}$ denote the canonical basis vectors. Let $S=\{s_1,\dots,s_l\}$ be the set of vertices in the first copy of $K_n$ that are connected to the second copy, and let $S' = \{s_1',\dots,s_l'\}$ be the corresponding vertices in the second copy. For each $k=2,\dots,l$, define $u_k = e_{s_1}-e_{s_k} + e_{s_1'}-e_{s_k'}.$ 

So
$
L_\alpha(G)u_k = n\alpha\,u_k
$
and consequently $u_k$ is an eigenvector associated with the eigenvalue $n\alpha$. Since $\{u_2,\dots,u_l\}$ is linearly independent, the multiplicity of $n\alpha$ is at least $l-1$. Similarly, for each $k=2,\dots,l$, define $
v_k = e_{s_1}-e_{s_k}-e_{s_1'}+e_{s_k'}.$ Then
\[
L_\alpha(G)v_k
= \alpha D(G)v_k + (\alpha-1)A(G)v_k
= n\alpha\,v_k + (\alpha-1)(-2v_k)
= \bigl((n-2)\alpha+2\bigr)v_k.
\]
Thus, $v_k$ is an eigenvector associated with the eigenvalue $(n-2)\alpha+2$. Since $\{v_2,\dots,v_l\}$ is linearly independent, this eigenvalue has multiplicity at least $l-1$. So the result follows.
\end{proof}

\begin{theorem}
Let $\alpha \in [0,1)$ and let $G \simeq KK_n^l$ with $1 \leq l \leq n$. Then
\[
\mathrm{Spec}(L_{\alpha}(G)) = \{(\alpha(n-2)+1)^{(2n-l-3)},\ (\alpha(n-1)+ 1)^{(l-1)},\ x_1,x_2,x_3,x_4\},
\]
where $x_1,\dots,x_4$ are the roots of 
\[
f(x) = \sum_{k=0}^{4} C_k(\alpha, l, n)\, x^k,
\]
with coefficients
\begin{align*}
C_4 &= 1, \\
C_3 &= 2n^3 - 6n^2 + 6n - 2 - \alpha l + 7\alpha + 2l, \\
C_2 &= n^2 - 6n + 6 + \alpha(13n^2 - 31n + 18) + l(5\alpha n - 8\alpha + l - 2n + 2), \\
C_1 &= -2n^2 + 6n - 4 + \alpha(19n^2 - 38n + 21) + l(-34\alpha n + 34\alpha - 2l + 3) \\
    &\quad + 10\alpha n^3 - 52\alpha n^2 + 78\alpha n - 36\alpha + 26\alpha l n - 20\alpha l + l^2, \\
C_0 &= 4\alpha^4 l n^3 - 20\alpha^4 l n^2 + 32\alpha^4 l n - 16\alpha^4 l + 4\alpha^4 n^4 - 20\alpha^4 n^3 \\
    &\quad + 36\alpha^4 n^2 - 28\alpha^4 n + 8\alpha^4 - 4\alpha^3 l^2 n + 6\alpha^3 l^2 - 4\alpha^3 l n^3 \\
    &\quad + 34\alpha^3 l n^2 - 70\alpha^3 l n + 40\alpha^3 l - 4\alpha^3 n^4 + 28\alpha^3 n^3 \\
    &\quad - 64\alpha^3 n^2 + 60\alpha^3 n - 20\alpha^3 + 8\alpha^2 l^2 n - 13\alpha^2 l^2 + \alpha^2 l n^3 \\
    &\quad - 22\alpha^2 l n^2 + 57\alpha^2 l n - 36\alpha^2 l + \alpha^2 n^4 - 13\alpha^2 n^3 \\
    &\quad + 41\alpha^2 n^2 - 47\alpha^2 n + 18\alpha^2 - 5\alpha l^2 n + 9\alpha l^2 + 7\alpha l n^2 \\
    &\quad - 21\alpha l n + 14\alpha l + 2\alpha n^3 - 11\alpha n^2 + 16\alpha n - 7\alpha + l^2 n - 2l^2 \\
    &\quad - l n^2 + 3l n - 2l + n^2 - 2n + 1.
\end{align*}
\end{theorem}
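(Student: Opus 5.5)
Following the pattern used for $K_p^q$ and $H_n^l$, I will fix a labeling of $KK_n^l$ adapted to its structure, exhibit an equitable partition, get $x_1,\dots,x_4$ from the quotient matrix via Theorem~\ref{equitable} and Theorem~\ref{quociente}, and obtain the remaining $2n-4$ eigenvalues from explicit eigenvectors, using Theorem~\ref{theo:twins} where possible. The vertex set splits into four cells. $V_1$ is the single vertex $w$ of the first copy that sends the $l$ extra edges, with degree $n-1+l$. $V_2$ is the other $n-1$ vertices of the first copy, with degree $n-1$. $V_3$ is the $l$ vertices of the second copy adjacent to $w$, with degree $n$. $V_4$ is the remaining $n-l$ vertices of the second copy, with degree $n-1$. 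When $l=n$ the cell $V_4$ is empty; this case needs separate bookkeeping, since the quotient is then $3\times 3$.

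In this ordering,
\[
L_\alpha(G)=\begin{bmatrix}
\alpha(n-1+l) & (\alpha-1)j^T_{n-1} & (\alpha-1)j^T_{l} & 0\\
(\alpha-1)j_{n-1} & B_{n-1} & 0 & 0\\
(\alpha-1)j_l & 0 & \alpha n I_l+(\alpha-1)(J_l-I_l) & (\alpha-1)J_{l,n-l}\\
0 & 0 & (\alpha-1)J_{n-l,l} & B_{n-l}
\end{bmatrix},
\]
where $B_m=\alpha(n-1)I_m+(\alpha-1)(J_m-I_m)$. Every block has constant row sums, so the partition is equitable, and the quotient matrix is
\[
M=\begin{bmatrix}
\alpha(n-1+l) & (\alpha-1)(n-1) & (\alpha-1)l & 0\\
\alpha-1 & \alpha(n-1)+(\alpha-1)(n-2) & 0 & 0\\
\alpha-1 & 0 & \alpha n+(\alpha-1)(l-1) & (\alpha-1)(n-l)\\
0 & 0 & (\alpha-1)l & \alpha(n-1)+(\alpha-1)(n-l-1)
\end{bmatrix}.
\]
Its characteristic polynomial should be $f(x)$, and Theorem~\ref{quociente}(i) places its roots $x_1,\dots,x_4$ in $\mathrm{Spec}(L_\alpha(G))$.

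For the remaining eigenvalues I will use difference vectors inside each cell. These vectors are orthogonal to the cell indicator vectors, so they are linearly independent of the quotient eigenvectors lifted to $G$.
\begin{itemize}
\item In $V_2$ the vertices are true twins of degree $n-1$. For $u,u'\in V_2$, the vector $e_u-e_{u'}$ satisfies $L_\alpha(G)(e_u-e_{u'})=(\alpha(n-1)-(\alpha-1))(e_u-e_{u'})=(\alpha(n-2)+1)(e_u-e_{u'})$, in agreement with Theorem~\ref{theo:twins}. This gives $n-2$ independent vectors.
\item In $V_4$ the vertices are true twins of degree $n-1$, which gives $n-l-1$ more vectors for $\alpha(n-2)+1$.
\item In $V_3$ the vertices are true twins of degree $n$ (closed neighborhood: all of the second copy together with $w$), so Theorem~\ref{theo:twins} gives $\alpha(n-1)+1$ with $l-1$ independent vectors.
\end{itemize}
The total for $\alpha(n-2)+1$ is $(n-2)+(n-l-1)=2n-l-3$, and $\alpha(n-1)+1$ gets $l-1$. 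Adding the four quotient eigenvalues gives $(2n-l-3)+(l-1)+4=2n$, which is the whole spectrum.

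I expect the main obstacle to be verifying that $\det(xI-M)$ equals the stated $f(x)$ with the coefficients $C_0,\dots,C_4$. I would first check the trace: $\operatorname{tr}M$ is linear in $\alpha$ and of degree $1$ in $n$, and it should equal $-C_3$. The stated $C_3$, however, contains $2n^3$, so it does not appear to match. My plan is to expand $\det(xI-M)$ along the second row, because that row has only two nonzero entries. This reduces the computation to a $3\times 3$ determinant times $(x-M_{22})$, plus a correction term equal to $(\alpha-1)^2(n-1)$ times the lower $2\times 2$ block determinant. I will then compare the result term by term with the stated coefficients. Where they disagree, I will state the characteristic polynomial of $M$ itself as the defining polynomial of $x_1,\dots,x_4$, since the structural part of the proof does not depend on the explicit form of the coefficients.
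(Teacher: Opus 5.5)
Your argument is essentially the paper's proof: the same four cells ($w$, the rest of the first copy, the $l$ neighbours of $w$, the rest of the second copy), the quotient matrix via Theorems~\ref{equitable} and~\ref{quociente}, and true-twin difference vectors for the remaining $2n-4$ eigenvalues. Your explicit independence argument and your separate treatment of $l=n$ are more careful than the paper's one-line appeal to Theorem~\ref{theo:twins}. At $l=n$ the printed count $2n-l-3$ only works if $f$ is read as the formal $4\times 4$ determinant, whose fourth column then reduces to the diagonal entry $\alpha(n-2)+1$ and supplies the missing copy.

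Your suspicion about the coefficients is also justified, and the statement as printed is false. The trace gives $C_3=-\operatorname{tr}M=2n-4-\alpha(6n+l-7)$. At $\alpha=0$, $n=3$, $l=1$ (two triangles joined by a bridge, quotient $A$-eigenvalues $1\pm\sqrt2,\pm\sqrt3$) one gets $\det(xI-M)=(x^2+2x-1)(x^2-3)=x^4+2x^3-4x^2-6x+3$, whereas the printed $f$ is $x^4+18x^3-6x^2-2x+3$. So $C_1,C_2,C_3$ are wrong; only $C_0$ agrees with $\det M$ at $\alpha=0$ and $\alpha=1$. The paper never computes the quotient polynomial; it only asserts that it equals $f$. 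Defining $x_1,\dots,x_4$ as the roots of $\det(xI-M)$, as you propose, is the right repair.
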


\begin{proof}
%With a suitable labeling of the vertices, we can write
Using a convenient labeling of the vertices of $G$, we can write
\begin{align*}
L_\alpha(G) &= \begin{bmatrix} 
(n+l-1)\alpha & (\alpha-1)J_{1\times(n-1)} & (\alpha-1)J_{1\times l} & 0\\ 
(\alpha-1)J_{(n-1)\times 1} & B_{22} & 0 & 0 \\
(\alpha-1)J_{l\times 1} & 0 & B_{33} & (\alpha-1)J_{l\times(n-l)} \\
0 & 0 & (\alpha-1)J_{(n-l)\times l} & B_{44}
\end{bmatrix},
\end{align*}
where
\[
B_{22}=(n-1)\alpha I_{n-1}+(\alpha-1)(J_{n-1}-I_{n-1}),\quad
B_{33}=n\alpha I_l+(\alpha-1)(J_l-I_l),\]
\[
B_{44}=(n-1)\alpha I_{n-l}+(\alpha-1)(J_{n-l}-I_{n-l}).
\]
Applying Theorem~\ref{equitable} in $L_\alpha(G)$, the quotient matrix $M_{L_\alpha(G)}$ has as the characteristic polynomial $f(x)$. By Theorem~\ref{quociente}, its eigenvalues belong to $\mathrm{Spec}(L_\alpha(G))$. Furthermore, by Theorem~\ref{theo:twins}, the eigenvalue $\alpha(n-2)+1$ has multiplicity $2n-l-3$, and $\alpha(n-1)+1$ has multiplicity $l-1$. So the result follows.
\end{proof}

\begin{theorem}
Let $\alpha \in [0,1]$ and let $G \simeq \Theta(c, s, \eta) \simeq K_c \lor \eta K_s$. Then
\begin{multline*}
\mathrm{Spec}(L_\alpha(G)) = 
\Big\{\lambda_1(M), \lambda_2(M), 
\big(\alpha (c-1)+\eta s + (\alpha-1)(-1)\big)^{(c-1)}, \\
\big(\alpha (s-1+ c) + (\alpha-1)(s-1)\big)^{(\eta-1)}, \;
\big(\alpha (s-1+ c) + (\alpha-1)(-1)\big)^{(\eta s- \eta)}  \Big\},
\end{multline*}
where $M$ is as in Theorem~\ref{33}, with $G\simeq K_c$, $H\simeq \eta K_s$, $k=c-1$, $n_1=c$, $r=s-1$, and $n_2=\eta s$.
\end{theorem}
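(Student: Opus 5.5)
The plan is to treat $\Theta(c,s,\eta)=K_c\lor \eta K_s$ as a join of two regular graphs and apply Theorem~\ref{33} directly. Here $G=K_c$ is $(c-1)$-regular of order $n_1=c$, and $H=\eta K_s$ is $(s-1)$-regular of order $n_2=\eta s$. Theorem~\ref{33} gives the two eigenvalues $\lambda_1(M),\lambda_2(M)$ of the stated quotient matrix $M$. It also gives
\[
\alpha(k+n_2)+(\alpha-1)\lambda_i(A(K_c)),\quad 2\le i\le c,
\qquad
\alpha(r+n_1)+(\alpha-1)\lambda_j(A(\eta K_s)),\quad 2\le j\le \eta s .
\]
The whole argument then reduces to listing $\mathrm{Spec}(A(K_c))$ and $\mathrm{Spec}(A(\eta K_s))$ with multiplicities and removing the eigenvalue that belongs to the all-ones direction.

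For $K_c$, the adjacency spectrum is $\{c-1,(-1)^{(c-1)}\}$. Discarding $\lambda_1=c-1$, which corresponds to $j_{c}$, leaves $-1$ with multiplicity $c-1$. This gives the eigenvalue $\alpha(c-1+\eta s)+(\alpha-1)(-1)$ with multiplicity $c-1$. The statement writes this as $\alpha(c-1)+\eta s+\ldots$; I read it as $\alpha(c-1+\eta s)$ and will note this in the proof. For $\eta K_s$, Theorem~\ref{theo:spec_union} applied to the adjacency matrix (block diagonal) gives the spectrum $\{(s-1)^{(\eta)},(-1)^{(\eta s-\eta)}\}$. Only one copy of $s-1$ corresponds to $j_{\eta s}$, so after removing it we keep $s-1$ with multiplicity $\eta-1$ and $-1$ with multiplicity $\eta s-\eta$. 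Substituting into the second family produces exactly $\alpha(s-1+c)+(\alpha-1)(s-1)$ with multiplicity $\eta-1$ and $\alpha(s-1+c)+(\alpha-1)(-1)$ with multiplicity $\eta s-\eta$.

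The step needing the most care is the multiplicity bookkeeping in the disconnected graph $\eta K_s$. The proof of Theorem~\ref{33} uses an orthogonal eigenbasis of $A(H)$ containing $j_{n_2}$, so I must check that such a basis exists even though $s-1$ has multiplicity $\eta$. It does: take the $\eta$ indicator vectors of the components, and replace them by $j_{\eta s}$ together with $\eta-1$ vectors constant on each copy and orthogonal to $j_{\eta s}$. Each of these $\eta-1$ vectors, padded with zeros, is an eigenvector of $L_\alpha(G)$ orthogonal to $j$, exactly as in Theorem~\ref{theo:joinvet}. Finally I will count eigenvalues to confirm completeness: $2+(c-1)+(\eta-1)+(\eta s-\eta)=c+\eta s$, the order of $\Theta(c,s,\eta)$. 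All the constructed eigenvectors are mutually orthogonal: those of the two quotient eigenvalues are constant on the two parts, and the remaining ones are orthogonal to the all-ones vector of their part. So the listed multiset is the full spectrum.
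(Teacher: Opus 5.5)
Your proposal is correct and takes the same route as the paper, which likewise specializes Theorem~\ref{33} to $K_c$ and $\eta K_s$. The paper cites Theorem~\ref{complete}, whereas you use the adjacency spectra directly, which is what Theorem~\ref{33} actually needs; your reading $\alpha(c-1+\eta s)+(\alpha-1)(-1)$ of the garbled first family is right, and your care with the repeated eigenvalue $s-1$ of the disconnected $\eta K_s$ and the final dimension count fill in details the paper leaves implicit.
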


\begin{proof}
The result follows by combining Theorems~\ref{complete} and~\ref{33} with the indicated parameters.
\end{proof}

\section{Conclusion}\label{sec4}

In this paper, we investigate the spectrum of the family $L_\alpha$ under several graph operations, including union, join, products, and splitting. We provide explicit spectral descriptions of $L_\alpha$ for each of these operations, highlighting the interplay between the structure of the graphs and their spectral properties. In addition, we establish a formula for the characteristic polynomial of $L_\alpha$ under the coalescence operation. Furthermore, we derive explicit expressions for the spectra of $L_\alpha$ for several important graph families, including pineapple graphs, $H_n^l$, $KK_n^l$, and core--satellite graphs, the latter naturally arising from combinations of union and join operations. As a direction for future research, we propose extending the spectral analysis of $L_\alpha$ to broader classes of graphs and investigating the ordering of its eigenvalues and their connections to structural graph properties.

\section{Acknowledgment}

This study was partially funded by FAPERJ - Funda\-ção Carlos Chagas Filho de Amparo à Pesquisa do Estado do Rio de Janeiro, Process SEI 260003/001228/2023, CNPq-Conselho Nacional de Desenvolvimento Científico e Tecnológico, Grant 405552/2023-8 and Programa Institucional de Bolsas de Iniciação Científica - PIBIC/CNPq.

%Bibliography
\bibliographystyle{plainnat}
\bibliography{references}  

\end{document}